 \author[Radoux]{F. Radoux}
\thanks{University of Li\`ege, Institute of mathematics, Grande Traverse, 12 - B37, B-4000
 Li\`ege, Belgium email : Fabian.Radoux@ulg.ac.be\\
MSC : 53B10, 53C10, 22E46}
\date{\today} 
\title[Explicit formula]{An explicit formula for the natural and conformally invariant quantization}
\newtheorem{lem}{Lemma}
\newtheorem{thm}[lem]{Theorem}
\newtheorem{prop}[lem]{Proposition}
\theoremstyle{remark}
\theoremstyle{definition}
\newtheorem{defi}{Definition}
\newcommand{\R}{\mathbb{R}}
\newcommand{\N}{\mathbb{N}}
\newcommand{\g}{\mathfrak{g}}
\newcommand{\tree}[1][\gamma]{\mathcal{T}_{{#1}}}
\newcommand{\gm}{{\g_{-1}}}
\newcommand{\Cf}{C^\flat}
\begin{document}
\begin{abstract}
In \cite{Leconj}, P. Lecomte conjectured the existence of a natural and conformally invariant quantization. In \cite{MR}, we gave a proof of this theorem thanks to the theory of Cartan connections. In this paper, we give an explicit formula for the natural and conformally invariant quantization of trace-free symbols thanks to the method used in \cite{MR} and to tools already used in \cite{moi} in the projective setting. This formula is extremely similar to the one giving the natural and projectively invariant quantization in \cite{moi}. 
\end{abstract}
\maketitle
\section{Introduction}
A quantization can be defined as a linear bijection from the space $\mathcal{S}(M)$ of symmetric contravariant tensor fields on a manifold $M$ (also called the space of \emph{Symbols}) to the space $\mathcal{D}_{\frac{1}{2}}(M)$ of
differential operators acting between half-densities.

It is known that there is no natural quantization procedure. In other words, the spaces of symbols and of differential operators are not isomorphic as representations of $\mathrm{Diff}(M)$.

The idea of equivariant quantization, introduced by P. Lecomte and V. Ovsienko in \cite{LO} is to reduce the group of local diffeomorphisms in the following way : if a Lie group $G$ acts (locally) on a manifold $M$, the action can be lifted to tensor fields and to differential operators and symbols. A $G$-equivariant quantization is then a quantization that exchanges the actions of $G$ on symbols and differential operators.

In \cite{DLO}, the authors considered the group $SO(p+1,q+1)$ acting on the space $\R^{p+q}$ or on a manifold endowed with a flat pseudo-conformal structure of signature $(p,q)$. They showed the existence and uniqueness of a  $SO(p+1,q+1)$-equivariant quantization in non-critical situations.

The problem of the $so(p+1,q+1)$-equivariant quantization on $\R^{m}$ has a counterpart on an arbitrary manifold $M$. In \cite{Leconj}, P. Lecomte conjectured the existence of a natural and conformally invariant quantization, i.e. a quantization procedure depending on a pseudo-riemannian metric, that would be natural (in all arguments) and that would be left invariant by a conformal change of metric.

We proved in \cite{MR} the existence of such a quantization using Cartan connections theory.

The goal of this paper is to obtain an explicit formula on $M$ for the natural
and conformally invariant quantization of trace-free symbols. This task can be
realized using tools exposed in \cite{MR} and \cite{Slo1}.

The paper is organized as follows. In the first section, we recall briefly the notions exposed in \cite{MR} necessary to
understand the article. In the second part, we calculate the explicit formula giving the natural and conformally invariant quantization for trace-free symbols on the Cartan fiber bundle using the method exposed in \cite{MR}. In the third section, we develop as in \cite{moi} this formula in terms of natural operators on the base manifold $M$, using tools explained in \cite{Slo1}, in order to obtain the announced explicit formula. It constitutes the generalization to any weight of density of the formula given by M. Eastwood in \cite{East} thanks to a completely different method. Moreover, Eastwood formula is given under a different form. 
\section{Fundamental tools}\label{tens}
Throughout this work, we let $M$ be a smooth manifold of dimension $m\geq 3$.
\subsection{Tensor densities}
Denote by $\Delta^{\lambda}(\R^m)$ the one dimensional representation of $GL(m,\R)$ given by
\[\rho(A) e = \vert det A\vert^{-\lambda} e,\quad\forall A\in
GL(m,\R),\;\forall e\in \Delta^{\lambda}(\R^m).\]
The vector bundle of $\lambda$-densities is then defined by
\[P^1M\times_{\rho}\Delta^{\lambda}(\R^m)\to M,\]
where $P^1M$ is the linear frame bundle of $M$.

Recall that the space $\mathcal{F}_{\lambda}(M)$ of smooth sections of this bundle, the space of $\lambda$-densities,  can be identified with the space $C^{\infty}(P^1M,
\Delta^{\lambda}(\R^m))_{GL(m,\R)}$ of functions $f$ such that
\[f(u A) = \rho (A^{-1}) f(u)\quad \forall u \in P^1M,\;\forall A\in
GL(m,\R).\]
\subsection{Differential operators and symbols}
As usual, we denote by $\mathcal{D}_{\lambda,\mu}(M)$ the space 
of differential operators from $\mathcal{F}_{\lambda}(M)$ to
$\mathcal{F}_{\mu}(M)$.

The space $\mathcal{D}_{\lambda,\mu}$ is filtered by 
the order of differential operators.
The space of \emph{symbols} is then the associated graded space of $\mathcal{D}_{\lambda,\mu}$. It is also known that the principal operators $\sigma_l$ $(l\in \N)$ allow to identify the space of symbols with the space of contravariant symmetric tensor fields with coefficients in $\delta$-densities where $\delta=\mu-\lambda$ is the shift value.

More precisely, we denote by $S^l_{\delta}(\R^m)$ or simply $S^l_{\delta}$ the space $S^l\R^m\otimes\Delta^{\delta}(\R^m)$ endowed with the natural representation $\rho$ of $GL(m,\R)$. Then the vector bundle of symbols of degree $l$ is 
\[P^1M\times_{\rho}S^l_{\delta}(\R^m)\to M.\]
The space $\mathcal{S}^l_{\delta}(M)$ of symbols of degree $l$ is then the space of smooth sections of this bundle, which can be identified with $C^{\infty}(P^1M, S^l_{\delta}(\R^m))_{ GL(m,\R)}$.
Finally, the whole space of symbols is
\[\mathcal{S}_{\delta}(M)
=\bigoplus_{l=0}^{\infty}\mathcal{S}^l_{\delta}(M),\]
endowed with the classical actions of diffeomorphisms and of vector fields.
\subsection{Natural and equivariant quantizations}
A \emph{quantization on $M$} is a linear bijection $Q_{M}$
  from the space of symbols $\mathcal{S}_{\delta}(M)$ to the space of differential operators $\mathcal{D}_{\lambda,\mu}(M)$ such
  that
\[\sigma_l(Q_{M}(S))=S,\;\forall S\in\mathcal{S}_{\delta}^{l}(M),\;\forall
  l\in\mathbb{N},\]
where $\sigma_l$ is the principal symbol operator on the space of operators of order less or equal to $l$.

In the conformal sense, a \emph{natural quantization} is a collection of quantizations $Q_M$ depending on a pseudo-Riemannian metric such that 
\begin{itemize}
\item
For all pseudo-Riemannian metric $g$ on $M$, $Q_{M}(g)$ is a quantization,
\item
If $\phi$ is a local diffeomorphism from $M$ to $N$, then one has
\[Q_{M}(\phi^{*}g)(\phi^{*}S)=\phi^{*}(Q_{N}(g)(S)),\]
 for all pseudo-Riemannian metrics $g$ on $N$, and all
$S\in\mathcal{S}_{\delta}(N).$
\end{itemize}
Recall now that two pseudo Riemannian metrics $g$ and $g'$ on a manifold $M$ are conformally equivalent if and only if there exists a positive function $f$ such that $g'=fg$. 

A quantization $Q_M$ is then \emph{conformally equivariant} if one has $Q_M(g)=Q_M(g')$  whenever $g$ and $g'$ are conformally equivalent.
\subsection{Conformal group and conformal algebra}
These tools were presented in detail in \cite[Section 3]{MR}. We give here the
 most important ones for this paper to be self-contained.

Given $p$ and $q$ such that $p+q=m$, we consider the conformal group $G=SO(p+1,q+1)$ and its following subgroup $H$:
\[H=\{ \left( 
           \begin{array}{ccc}
              a^{-1} & 0 & 0\\
              a^{-1}A\xi^\sharp & A & 0\\
              \frac{1}{2a}|\xi|^2 & \xi & a
           \end{array}
        \right): A\in
        O(p,q),a\in\mathbb{R}_{0},\xi\in\mathbb{R}^{m*}\}/\{\pm I_{m+2}\}.\]
The subgroup $H$ is a semi-direct product $G_{0}\rtimes G_{1}$. Here $G_{0}$ is isomorphic to $CO(p,q)$ and $G_{1}$ is isomorphic to $\mathbb{R}^{m*}$. 
        
The Lie algebra of $G$ is $\mathfrak{g}=so(p+1,q+1)$. It decomposes as a direct sum of subalgebras :
\begin{equation}\label{grading}\g=\mathfrak{g}_{-1}\oplus\mathfrak{g}_{0}\oplus\mathfrak{g}_{1}\end{equation}
where $\g_{-1}\cong\R^{m}$, $\g_0\cong co(p,q)$, and $\g_1\cong\mathbb{R}^{m*}$.

This correspondence induces a structure of Lie algebra on $\mathbb{R}^{m}\oplus
co(p,q)\oplus\mathbb{R}^{m*}$. It is easy to see that the adjoint actions of $G_0$ and of $co(p,q)$ on $\gm=\R^m$ and on $\g_1={\R^m}^*$ coincides with the natural actions of $CO(p,q)$ and of $co(p,q)$. It is interesting for the sequel to note that :
$$[v,\xi]=v\otimes\xi+\xi(v)I_{m}-\xi^{\sharp}\otimes v^{\flat},$$
if $v\in\g_{-1}$, $\xi\in\g_{1}$ and if $I_{m}$ denotes the identity matrix of dimension $m$. The applications $\flat$ and $\sharp$ represent the classical isomorphisms between $\R^{m}$ and $\R^{m*}$ detailed in \cite{MR}.

The Lie algebras corresponding to $G_{0}$,
$G_{1}$ and $H$ are respectively $\mathfrak{g}_{0}$, $\mathfrak{g}_{1}$, and
$\mathfrak{g}_{0}\oplus\mathfrak{g}_{1}$.
\subsection{Cartan fiber bundles}
It is well-known that there is a bijective and natural correspondence between
the conformal structures on $M$ and the reductions of $P^{1}M$ to the
structure group $G_{0}\cong CO(p,q)$. The representations $(V,\rho)$ of $GL(m,\R)$ defined so far can be restricted to the group $CO(p,q)$. Therefore, once a conformal structure is given, i.e. a reduction $P_0$ of $P^1M$ to $G_0$, we can identify tensors fields of type $V$ as $G_0$-equivariant functions on $P_0$.

In \cite{Koba}, one shows that it is possible to
associate at each $G_{0}$-structure $P_{0}$ a principal $H$-bundle $P$ on $M$,
this association being natural and obviously conformally invariant. Since $H$ can be considered as a subgroup of $G^2_m$, this $H$-bundle can
be considered as a reduction of $P^{2}M$. The relationship between conformal structures and reductions of $P^2M$ to $H$ is given by the following proposition.
\begin{prop}
There is a natural one-to-one correspondence between the conformal equivalence
classes of pseudo-Riemannian metrics on $M$ and the reductions of $P^2M$ to $H$.\end{prop}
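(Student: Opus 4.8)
The plan is to realize the claimed bijection as the composition of two natural correspondences and then to verify that each is a bijection. First I would invoke the well-known correspondence recalled above between conformal equivalence classes of pseudo-Riemannian metrics and reductions $P_0$ of $P^1M$ to the structure group $G_0\cong CO(p,q)$. This reduces the problem to exhibiting a natural one-to-one correspondence between $G_0$-reductions of $P^1M$ and $H$-reductions of $P^2M$, and to checking that it respects pullback by local diffeomorphisms.

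Next I would describe the two maps explicitly. In one direction, Kobayashi's construction (cited above) assigns to each $G_0$-structure $P_0$ a natural $H$-subbundle $P\subset P^2M$, which provides a well-defined and natural map from $G_0$-reductions to $H$-reductions. In the other direction I would use the canonical projection $\pi:P^2M\to P^1M$: since the inclusion $H\hookrightarrow G^2_m$ is compatible with the group projection $G^2_m\to GL(m,\R)$ and carries $H$ onto $G_0$, the image $\pi(P)$ of any $H$-reduction $P$ is a $G_0$-invariant subbundle of $P^1M$, hence a $G_0$-reduction. This defines the candidate inverse.

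I would then check that the two maps are mutually inverse. That $\pi$ applied to Kobayashi's $P$ returns $P_0$ is immediate from the construction, since $P$ is built as a subbundle of $\pi^{-1}(P_0)$. The substantial point is the converse: given an arbitrary $H$-reduction $P$ with $\pi(P)=P_0$, one must show that $P$ coincides with the $H$-subbundle produced by Kobayashi from $P_0$. Writing $N=\ker(G^2_m\to GL(m,\R))$ and recalling that $G_1\subset N$, the bundle $\pi^{-1}(P_0)\to M$ carries the structure group $G_0\rtimes N$, and an $H$-reduction of it amounts to a section of an associated bundle with typical fibre $N/G_1$. Uniqueness therefore rests on a normalization condition: among all such sections exactly one makes the induced structure function (torsion) satisfy the trace-normalization that singles out the normal Cartan connection, and this is precisely the section selected by Kobayashi's prolongation. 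The invertibility of the relevant algebraic map on $N/G_1$ is governed by the grading $\g=\g_{-1}\oplus\g_0\oplus\g_1$ and the bracket $[v,\xi]=v\otimes\xi+\xi(v)I_m-\xi^\sharp\otimes v^\flat$.

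Finally, naturality of the resulting bijection follows because both the correspondence with $G_0$-structures and Kobayashi's prolongation are functorial with respect to local diffeomorphisms, so the composite commutes with pullback by any $\phi$. I expect the main obstacle to be the uniqueness step of the previous paragraph: proving that the normalization has a \emph{unique} solution among the $N/G_1$ worth of candidate reductions, which is exactly where the Lie-algebraic structure of the grading must be used to guarantee that the normalization operator is bijective.
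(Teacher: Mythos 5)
The paper offers no proof of this proposition: it is simply recalled from \cite{Koba} and the companion paper \cite{MR}, so your attempt can only be measured against the standard prolongation argument. Your skeleton --- factor the correspondence through $G_0$-reductions of $P^1M$, send $P_0$ to its Kobayashi prolongation $P\subset P^2M$, and use the jet projection $\pi\colon P^2M\to P^1M$ as the candidate inverse --- is the right one, and you correctly locate where the real work lies.

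However, the crucial step is not merely left unproved; as written its logic does not deliver the statement. Over a fixed $P_0$ you observe, correctly, that the $H$-reductions of $\pi^{-1}(P_0)$ form a torsor over sections of a bundle with fibre $N/G_1$, and you then argue that \emph{exactly one} such section satisfies a normalization. That establishes uniqueness of a \emph{normalized} reduction, i.e.\ injectivity of the prolongation map; but the proposition asserts a bijection with the $H$-reductions of $P^2M$, so you must additionally explain why an arbitrary $H$-reduction over $P_0$ coincides with the prolonged one. Since $\dim N=\tfrac{m^{2}(m+1)}{2}$ while $\dim G_1=m$, the fibre $N/G_1$ is far from a point, so this does not come for free: the distinguished reduction is characterized by a genuine condition (its $2$-jets osculate the conformal class to first order, equivalently the induced torsion-free horizontal subspaces are tangent to $P_0$), and one must either build that condition into the notion of reduction being used or argue that every $H$-reduction satisfies it. Your proposal does neither. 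A second, smaller slip: the normalization you invoke (``the trace-normalization that singles out the normal Cartan connection'') is a condition on the \emph{curvature} of the Cartan connection, imposed only after the bundle $P$ is fixed; the condition selecting $P$ inside $\pi^{-1}(P_0)$ is the earlier, purely first-order tangency condition, whose unique solvability fibrewise reduces to the invertibility of the Christoffel-type map $S^2\R^{m*}\otimes\R^m\to\R^{m*}\otimes S^2\R^{m*}$ --- conflating the two normalizations hides the linear algebra that actually has to be checked.
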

Throughout this work, we will freely identify conformal structures and reductions of $P^2M$ to $H$. 
\subsection{Cartan connections}
Let $L$ be a Lie group and $L_{0}$ a closed subgroup. Denote by $\mathfrak{l}$
and $\mathfrak{l}_{0}$ the corresponding Lie algebras. Let $N\to M$ be a
principal $L_{0}$-bundle over $M$, such that $\dim M=\dim L/L_{0}$. A Cartan
connection on $N$ is an $\mathfrak{l}$-valued one-form $\omega$ on $N$ such
that
\begin{enumerate}
\item
If $R_{a}$ denotes the right action of $a\in L_0$ on $N$, then
$R_{a}^{*}\omega=Ad(a^{-1})\omega$,
\item
If $k^{*}$ is the vertical vector field associated to $k\in\mathfrak{l}_0$,
then $\omega(k^{*})=k,$
\item
$\forall u\in N$, $\omega_{u}:T_{u}N\mapsto\mathfrak{l}$ is a linear
  bijection.
\end{enumerate}
When considering in this definition a principal $H$-bundle $P$, and taking as group $L$ the group $G$ and for $L_0$ the group $H$, we obtain the definition of Cartan conformal connections.

If $\omega$ is a Cartan connection defined on an $H$-principal bundle $P$, then its
curvature $\Omega$ is defined by
 \begin{equation}\label{curv} 
\Omega = d\omega+\frac{1}{2}[\omega,\omega].
\end{equation}
The notion of \emph{Normal} Cartan connection is defined by natural conditions
imposed on the components of the curvature.

Now, the following result (\cite[p. 135]{Koba}) gives the relationship between conformal
structures and Cartan connections :
\begin{prop}
 A unique normal
 Cartan conformal connection is
 associated to every conformal structure $P$. This association is natural.
\end{prop}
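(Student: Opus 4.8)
The plan is to construct the connection from an arbitrary representative metric in the conformal class and then to pin it down by a purely algebraic normalization of its curvature. The bundle $P$ is already at hand, since by the previous proposition a conformal structure is a reduction of $P^2M$ to $H$. Choosing a metric $g$ in the conformal class singles out a reduction $P_0$ of $P^1M$ to $G_0\cong CO(p,q)$ together with the Levi-Civita connection of $g$, which preserves the conformal class and hence lives on $P_0$; from these data one writes down a first Cartan conformal connection $\omega$ on $P$ whose $\g_{-1}$-component is the canonical soldering form and whose $\g_0$-component comes from Levi-Civita, leaving the $\g_1$-component undetermined for the moment. I would check that such an $\omega$ satisfies the three defining axioms, the only nontrivial point being the pointwise bijectivity $\omega_u : T_uP \to \g$, which holds because the soldering form trivializes the horizontal part while the vertical part is reproduced by construction.

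Next I would analyse the whole space of Cartan connections on the fixed bundle $P$. Two such connections differ by a horizontal, $H$-equivariant $\g$-valued one-form, hence by an $H$-equivariant linear map $\gm\to\g$, i.e.\ by a section of an associated bundle. Decomposing along the grading $\g=\g_{-1}\oplus\g_0\oplus\g_1$, I would compute from $(\ref{curv})$ how each graded component of the curvature $\Omega$ transforms under such a modification. The leading effect is governed by the Lie-algebra (Kostant) differential $\partial$ of the complex computing the $\gm$-cohomology with values in $\g$, the lower-order effects being controlled by the already-computed components of $\Omega$.

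The normality conditions amount to the vanishing of the codifferential $\partial^{*}\Omega$, that is, to a trace (Ricci-type) condition on the curvature. First the torsion $\Omega_{-1}$ vanishes automatically, since the $\g_0$-part of $\omega$ comes from the torsion-free Levi-Civita connection. The remaining normalization then reduces to a single linear equation for the undetermined $\g_1$-component $\alpha$: the condition $\partial^{*}\Omega_0=0$ reads $\partial^{*}\partial\,\alpha=-\,\partial^{*}(\text{known curvature})$. The heart of the matter, and the main obstacle, is to show that the relevant piece of the Kostant Laplacian $\partial^{*}\partial$ acts invertibly, so that $\alpha$ exists and is unique; in the conformal $|1|$-graded situation this holds precisely when $m\geq 3$, which is exactly where the standing hypothesis $\dim M\geq 3$ enters, and the solution $\alpha$ is then the Schouten (rho) tensor built algebraically from the Ricci curvature of $g$ (see \cite{Koba}).

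Finally I would establish naturality. Every operation used above — passing from $g$ to its Levi-Civita connection, forming $d\omega+\frac{1}{2}[\omega,\omega]$, and the algebraic contractions defining $\partial$ and $\partial^{*}$ — commutes with pullback by local diffeomorphisms. Hence the normal connection attached to a conformal structure $P$ intertwines the pullbacks on source and target, which is the asserted naturality. Uniqueness guarantees in particular that the result is independent of the auxiliary metric $g$ chosen at the first step, so that the association is well defined on conformal equivalence classes.
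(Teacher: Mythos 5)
The paper does not actually prove this proposition---it is recalled from Kobayashi (\cite{Koba}, p.~135) with no in-text argument---so there is nothing internal to compare against; your sketch reproduces the standard construction from that source and from the parabolic-geometry literature, and it is correct in outline. In particular, the key points are all in place: torsion-freeness is inherited from Levi-Civita, the normalization $\partial^{*}\Omega=0$ reduces to inverting the Kostant Laplacian on $\g_{-1}^{*}\otimes\g_{1}$ (which is exactly the vanishing of the relevant Lie algebra cohomology for $m\geq 3$), uniqueness gives independence of the chosen representative metric, and your identification of the resulting $\g_{1}$-component with the Schouten tensor agrees with the deformation tensor $\Gamma=\frac{-1}{m-2}(\mathrm{Ric}-\frac{g_{0}\mathrm{R}}{2(m-1)})$ that the paper uses later.
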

The connection associated to a conformal structure $P$ is called the normal conformal connection of the conformal structure.

\subsection{Lift of equivariant functions}\label{Lift}

In a previous subsection, we recalled how to associate an $H$-principal bundle $P$ to a conformal structure $P_0$. We now recall how the densities and the symbols can be regarded as equivariant functions on $P$.

If $(V,\rho)$ is a representation of $G_0$, then we may extend it to a representation $(V,\rho')$ of $H$ (see \cite{MR}). Now, using the representation $\rho'$, we can recall the relationship between
equivariant functions on $P_{0}$ and equivariant functions on $P$ (see \cite{Slo1}): if we
denote by $p$ the projection $P\to P_{0}$ , we have
\begin{prop}
If $(V,\rho)$ is a representation of $G_0$, then the map
$$p^{*}:C^{\infty}(P_{0},V)\mapsto C^{\infty}(P,V):f\mapsto f\circ p$$
defines a bijection from $C^{\infty}(P_{0},V)_{G_0}$ to
$C^{\infty}(P,V)_{H}$.
\end{prop}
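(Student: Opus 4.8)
The plan is to exploit the structure of $H$ as the semidirect product $G_0\rtimes G_1$ and the fact that $P_0$ is recovered from $P$ as the quotient by the normal subgroup $G_1$. Concretely, the projection $p:P\to P_0$ is the orbit map for the right action of $G_1$, so $p$ is a surjective submersion whose fibres are exactly the $G_1$-orbits, and it is compatible with the group actions in the sense that $p(uh)=p(u)\,\bar h$, where $\bar h\in G_0\cong H/G_1$ denotes the class of $h$. I would first record the precise form of the extension $\rho'$ recalled from \cite{MR}: since $G_1$ is the nilpotent part of $H$, the representation $\rho$ of $G_0$ is extended to $H$ by letting $G_1$ act trivially, that is $\rho'(h)=\rho(\bar h)$, and in particular $\rho'(g_1)=\mathrm{id}$ for every $g_1\in G_1$.

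With these facts at hand, well-definedness is immediate. For $f\in C^\infty(P_0,V)_{G_0}$, $u\in P$ and $h\in H$, I would compute $(f\circ p)(uh)=f(p(u)\,\bar h)=\rho(\bar h^{-1})f(p(u))=\rho'(h^{-1})(f\circ p)(u)$, so $p^*f$ indeed lies in $C^\infty(P,V)_H$. Injectivity then follows at once from the surjectivity of $p$: if $f_1\circ p=f_2\circ p$, then $f_1=f_2$.

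The heart of the argument is surjectivity. Given $g\in C^\infty(P,V)_H$, I would first observe that $g$ is constant along the fibres of $p$: restricting the $H$-equivariance to $g_1\in G_1$ gives $g(ug_1)=\rho'(g_1^{-1})g(u)=g(u)$, precisely because $G_1$ acts trivially under $\rho'$. Since the fibres of $p$ are the $G_1$-orbits, $g$ factors through $p$, yielding a unique function $f$ on $P_0$ with $f\circ p=g$; smoothness of $f$ is inherited from that of $g$ because $p$ is a surjective submersion admitting local sections, so $f$ coincides locally with $g$ composed with such a section. It then remains to verify that $f$ is $G_0$-equivariant. Viewing $G_0$ as a subgroup of $H$, for $v\in P_0$ and $A\in G_0$ I would choose any $u\in P$ with $p(u)=v$ and use $p(uA)=vA$ together with the $H$-equivariance of $g$ to obtain $f(vA)=g(uA)=\rho'(A^{-1})g(u)=\rho(A^{-1})f(v)$, whence $f\in C^\infty(P_0,V)_{G_0}$ and $p^*f=g$.

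I expect the only genuinely delicate point to be the geometric identification of $p$ with the quotient map $P\to P/G_1$ and the resulting compatibility $p(uh)=p(u)\,\bar h$; once this description of the fibration $P\to P_0$ is in place, the equivariance computations are routine and the triviality of $G_1$ under $\rho'$ does all the work of collapsing $H$-equivariant functions into $G_0$-equivariant ones.
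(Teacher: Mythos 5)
Your proof is correct. The paper itself offers no argument for this proposition, delegating it to \cite{Slo1}; your write-up is the standard proof one finds there, resting on exactly the two facts you isolate: that $\rho'$ extends $\rho$ by letting the normal subgroup $G_1$ act trivially (consistent with the paper's infinitesimal formula $\rho'_*(A,\xi)=\rho_*(A)$), and that $p:P\to P_0$ realizes $P_0$ as $P/G_1$ with the induced right $G_0\cong H/G_1$ action, which is part of the prolongation construction of $P$ from $P_0$ in \cite{Koba}. The equivariance computations, the factorization of $H$-equivariant functions through $p$, and the smoothness argument via local sections of the submersion $p$ are all sound.
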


As we continue, we will use the representation $\rho'_*$ of the Lie algebra of
$H$ on $V$. If we recall that this algebra is isomorphic to
 $\g_{0}\oplus\g_{1}$, then we have 
\begin{equation}\label{rho}\rho'_* (A, \xi) = \rho_*(A),\quad\forall A\in\g_{0}, \xi\in \g_{1}.\end{equation}

\subsection{The application $Q_{\omega}$}
The construction of the application $Q_{\omega}$ is based
on the concept of invariant differentiation developed in
\cite{Slo1}.
Let us recall the definition :

\begin{defi}
If $f\in C^{\infty}(P,V)$ then $(\nabla^{\omega})^k f
\in C^{\infty}(P,\otimes^k\R^{m*}\otimes V)$ is defined by
\[(\nabla^{\omega})^k f(u)(X_1,\ldots,X_k) = L_{\omega^{-1}(X_{1})}\circ\ldots\circ L_{\omega^{-1}(X_{k})}f(u)\]
for $X_1,\ldots,X_k\in\R^m$.
\end{defi}

\begin{defi}The map $Q_{\omega}$ is defined by its
  restrictions to $C^{\infty}(P,\otimes^k\g_{-1}\otimes\Delta^{\delta}(\R^{m}))$, $(k\in \N)$ : we set
\begin{equation}\label{Qom}
Q_{\omega}(T)(f)=\langle T,(\nabla^{\omega})^k f\rangle,
\end{equation}
for all $T\in C^{\infty}(P,\otimes^k\g_{-1}\otimes\Delta^{\delta}(\R^{m}))$ and $f\in C^{\infty}(P,\Delta^{\lambda}(\R^{m}))$.
\end{defi}
Explicitly, when the symbol $T$ writes $t A\otimes h_1\otimes\cdots\otimes h_k$ 
for $t\in C^{\infty}(P)$, $A\in\Delta^{\delta}(\R^{m})$ and $h_1,\cdots,
h_k\in\R^m\cong\g_{-1}$ then one has
\[Q_{\omega}(T)f=t A\circ
L_{\omega^{-1}(h_{1})}\circ\cdots\circ L_{\omega^{-1}(h_{k})}f,\]
where $t$ is considered as a multiplication operator.

\subsection{The map $\gamma$}

\begin{defi}\label{gamma1}
We define $\gamma$ on $\otimes^k\gm\otimes\Delta^{\delta}(\R^{m})$ by 
\[\begin{array}{r}\gamma(h)(x_1\otimes\cdots\otimes x_k\otimes A)=\lambda\sum_{i=1}^k tr([h,x_{i}]) x_1\otimes\cdots(i)\cdots\otimes x_k\otimes A\\
+\sum_{i=1}^k\sum_{j>i}x_1\otimes\cdots(i)\cdots\otimes\underbrace{[[h,x_i],x_j]}_{(j)}\otimes\cdots\otimes
x_k\otimes A.
\end{array}\]
for every $x_1,\cdots, x_k\in\gm$, $A\in\Delta^{\delta}(\R^{m})$ and $h\in \g_1$. Then we extend it to $C^\infty(P,\otimes^k\gm\otimes\Delta^{\delta}(\R^{m}))$ by $C^\infty(P)$-linearity.
\end{defi}
\begin{defi}
A trace-free symbol $S$ is a symbol such that $i(g)S=0$ if $g$ is a metric belonging to the conformal structure $P$.
\end{defi}
If $S$ is an equivariant function representing a trace-free symbol, $i(g_{0})S=0$ if $g_{0}$ represents the canonical metric on $\R^{m}$ corresponding to the conformal structure $P$ (see \cite{MR}, section 3). It is then easy to show that
\begin{prop}\label{gammafree}
If $S$ is a trace-free symbol of degree $k$, \\
$\gamma(h)S=-k(\lambda m+k-1)i(h)S$. In particular, $\gamma(h)S$ is trace-free.
\end{prop}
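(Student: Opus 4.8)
The plan is to check the identity on a convenient spanning set of the trace-free symbols and conclude by linearity. Recall that the trace-free symmetric tensors of degree $k$ are spanned, after complexification if the signature is definite, by the powers $x^{\otimes k}$ with $x\in\gm$ a null vector (that is $g(x,x)=0$): indeed each such power is trace-free, and these powers generate the (irreducible) space of trace-free tensors. Since $\gamma(h)$ and $i(h)$ are both linear, it is enough to prove the formula for $S=x^{\otimes k}\otimes A$ with $g(x,x)=0$ and $A\in\Delta^{\delta}(\R^m)$. This reduction is also what makes the trace-free hypothesis usable, through the pointwise relation $g(x,x)=0$.

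First I would compute the two algebraic ingredients appearing in Definition~\ref{gamma1}, using the bracket formula $[v,\xi]=v\otimes\xi+\xi(v)I_m-\xi^{\sharp}\otimes v^{\flat}$ together with the fact that $\g_0$ acts on $\gm$ by the natural matrix action. Writing $[h,x]=-[x,h]$ and taking the trace, the three summands contribute $h(x)$, $m\,h(x)$ and $h(x)$, so that $\tr([x,h])=m\,h(x)$ and hence $\tr([h,x])=-m\,h(x)$. Evaluating the endomorphism $[h,x]\in\g_0$ on $x$ gives $[[h,x],x]=-2h(x)\,x+g(x,x)\,h^{\sharp}$. This is the conceptual heart of the proof: the term $g(x,x)\,h^{\sharp}$ is precisely the piece eliminated by the trace-free condition, so on a null power it vanishes and $[[h,x],x]=-2h(x)\,x$.

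Next I would substitute these into the defining formula for $\gamma$. In the first sum each of the $k$ terms equals $\tr([h,x])\,x^{\otimes(k-1)}\otimes A=-m\,h(x)\,x^{\otimes(k-1)}\otimes A$, so the first sum contributes $-\lambda m k\,h(x)\,x^{\otimes(k-1)}\otimes A$. In the second sum, each of the $\binom{k}{2}$ terms deletes one factor $x$ and replaces another by $[[h,x],x]=-2h(x)\,x$, hence equals $-2h(x)\,x^{\otimes(k-1)}\otimes A$, and the second sum contributes $-k(k-1)\,h(x)\,x^{\otimes(k-1)}\otimes A$. Adding the two gives $\gamma(h)S=-k(\lambda m+k-1)\,h(x)\,x^{\otimes(k-1)}\otimes A$. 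Recalling that $i(h)(x^{\otimes k}\otimes A)=h(x)\,x^{\otimes(k-1)}\otimes A$, the right-hand side is exactly $-k(\lambda m+k-1)\,i(h)S$, so the formula holds on generators and therefore everywhere by linearity.

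For the final assertion I would observe that $i(h)S$ is itself trace-free: on a generator $i(h)(x^{\otimes k}\otimes A)=h(x)\,x^{\otimes(k-1)}\otimes A$ is again a multiple of a power of the null vector $x$, hence trace-free, and this property passes to the linear combinations recovering a general trace-free $S$; equivalently, $i(h)$ commutes with the metric contraction $i(g)$ and so preserves its kernel. The main obstacle I expect is not conceptual but one of bookkeeping: getting the combinatorial factors $k$ and $\binom{k}{2}$ in the two sums right and matching them against the normalization of $i(h)$, the remainder being a direct substitution of the two bracket computations.
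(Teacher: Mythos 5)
Your proof is correct. The paper itself offers no argument here --- it simply states that the identity ``is easy to show'' after noting that trace-freeness means $i(g_0)S=0$ --- so the implied route is presumably the direct one: compute $[[h,x_i],x_j]=-h(x_i)x_j-h(x_j)x_i+g_0(x_i,x_j)h^{\sharp}$ for a general symmetric $S$ and observe that the $g_0(x_i,x_j)h^{\sharp}$ contribution assembles into a term proportional to $h^{\sharp}\vee i(g_0)S$, which vanishes. Your reduction to null powers $x^{\otimes k}$ (after complexification, using irreducibility of the trace-free module for $m\geq 3$) is a legitimate and arguably cleaner way to organize the same computation: the bracket traces, the evaluation $[[h,x],x]=-2h(x)x+g(x,x)h^{\sharp}$, the counts $k$ and $\binom{k}{2}$, and the single-slot normalization $i(h)(x^{\otimes k})=h(x)\,x^{\otimes(k-1)}$ all check out against the stated constant $-k(\lambda m+k-1)$, and both of your arguments for the final trace-freeness claim are valid.
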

\subsection{Casimir-like operators}

Recall that we can define an operator called the Casimir operator $\Cf$ on $C^\infty(P,\otimes^k\gm\otimes\Delta^{\delta}(\R^{m}))$ (see \cite{MR}). This operator $\Cf$ is semi-simple. The vector space $\otimes^k\gm\otimes\Delta^{\delta}(\R^{m})$ can be decomposed as an $o(p,q)$-representation into irreducible components (since $o(p,q)$ is semi-simple):
\[\otimes^k\gm\otimes\Delta^{\delta}(\R^{m})=\oplus_{s=1}^{n_k}I_{k,s}.\]
The restriction of $\Cf$ to $C^\infty(P,I_{k,s})$ is then a scalar multiple of the identity.

We defined in \cite{MR} two other operators. If we denote respectively by $(e_1,\ldots,e_m)$ and $(\epsilon^{1},\ldots,\epsilon^{m})$ a basis of $\g_{-1}$ and a basis of $\g_{1}$ which are dual with respect to the Killing form of $so(p+1,q+1)$, then 
\begin{defi}The operator $N^{\omega}$ is defined on $C^\infty(P,\otimes^k\gm\otimes\Delta^{\delta}(\R^{m}))$ by
\[N^{\omega}=-2\sum_{i=1}^m\gamma(\varepsilon^i)L_{\omega^{-1}}(e_i),\]and we set
\[C^\omega:=\Cf+N^\omega.\]
\end{defi}
\subsection{Construction of the quantization}\label{curvconst} 

Recall that $\otimes^k\g_{-1}\otimes\Delta^{\delta}(\R^{m})$ is decomposed as a representation of $o(p,q)$ as the direct sum of irreducible components $I_{k,s}$ with $0\leq s\leq \frac {k}{2}$ (see \cite{DLO}). Remark that if $S$ is a trace-free symbol of degree $k$, then $S\in I_{k,0}$. Denote by $E_{k,s}$ the space $C^\infty(P,I_{k,s})$ and by $\alpha_{k,s}$ the eigenvalue of $\Cf$ restricted to $E_{k,s}$.

The tree-like susbspace $\tree(I_{k,s})$ associated to $I_{k,s}$ is defined by
\[
\tree(I_{k,s})=\bigoplus_{l\in\N}\tree^l(I_{k,s}),
\]
where $\tree^0(I_{k,s})=I_{k,s}$ and
$\tree^{l+1}(I_{k,s})=\gamma(\g_1)(\tree^l(I_{k,s}))$,
for all $l\in\N$.
The space $\tree^l(E_{k,s})$ is then defined in the same way. Since $\gamma$ is $C^\infty(P)$-linear, this space is equal to $C^{\infty}(P,\tree^l(I_{k,s}))$.

\begin{defi}
A value of $\delta$ is \emph{critical} if there exists $k,s$ such that the eigenvalue $\alpha_{k,s}$ corresponding to an irreducible component $I_{k,s}$ of $\otimes^k\g_{-1}\otimes\Delta^{\delta}(\R^{m})$
belongs to the spectrum of the restriction of $C^{\flat}$ to 
$\bigoplus_{l\geq 1}\tree^l(E_{k,s})$.
\end{defi}
Recall now the following result :
\begin{thm}\label{hat}
If $\delta$ is not critical, for every $T$
in $C^{\infty}(P,I_{k,s})$, (where $I_{k,s}$ is an irreducible component of $\otimes^k\g_{-1}\otimes\Delta^{\delta}(\R^{m})$) there exists a unique function $\hat{T}$ in $C^{\infty}(P,\tree(I_{k,s}))$
 such that
\begin{equation}\label{curvP}\left\{\begin{array}{lll}
\hat{T}&=&T_k+\cdots+T_0,\quad T_k=T\\
C^{\omega}(\hat{T})&=&\alpha_{k,s}\hat{T}.
\end{array}\right.
\end{equation} 
\end{thm}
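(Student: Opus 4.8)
The plan is to exploit the grading by tensor degree that is implicit in the whole construction. The starting point is the observation that $\gamma(h)$ lowers the tensor degree by one: inspection of Definition \ref{gamma1} shows that both terms of $\gamma(h)(x_1\otimes\cdots\otimes x_k\otimes A)$ omit one factor $x_i$, so $\gamma(h)$ sends $\otimes^k\gm\otimes\Delta^{\delta}(\R^m)$ into $\otimes^{k-1}\gm\otimes\Delta^{\delta}(\R^m)$. Consequently $\tree^l(I_{k,s})\subseteq\otimes^{k-l}\gm\otimes\Delta^{\delta}(\R^m)$; in particular $\tree^l(I_{k,s})=0$ for $l>k$, the tree is finite, and the announced decomposition $\hat T=T_k+\cdots+T_0$ is exactly the decomposition of $\hat T\in C^{\infty}(P,\tree(I_{k,s}))$ into its homogeneous components $T_{k-l}\in\tree^l(E_{k,s})$, with $T_k=T$ fixed in $\tree^0(E_{k,s})=E_{k,s}$.

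Next I would record how $C^{\omega}=\Cf+N^{\omega}$ interacts with this grading. The operator $\Cf$ is the algebraic $o(p,q)$-Casimir, hence it preserves the tensor degree and, by semisimplicity, acts on each $C^{\infty}(P,I_{k',s'})$ as the scalar $\alpha_{k',s'}$. The operator $N^{\omega}=-2\sum_i\gamma(\varepsilon^i)L_{\omega^{-1}}(e_i)$ lowers the tensor degree by one, since $L_{\omega^{-1}}(e_i)$ differentiates the scalar part while leaving the (fixed) value space unchanged and $\gamma(\varepsilon^i)$ drops the degree; moreover, because $\tree^l(I_{k,s})=\gamma(\g_1)(\tree^{l-1}(I_{k,s}))$, one checks that $N^{\omega}$ maps $\tree^{l-1}(E_{k,s})$ into $\tree^l(E_{k,s})$. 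Projecting the eigenvalue equation $C^{\omega}\hat T=\alpha_{k,s}\hat T$ onto its homogeneous components then yields, in degree $k$, the identity $\Cf T_k=\alpha_{k,s}T_k$, which holds automatically since $T_k=T\in E_{k,s}$, and in degree $k-l$ (for $1\le l\le k$) the triangular recursion
\[(\Cf-\alpha_{k,s})\,T_{k-l}=-N^{\omega}T_{k-l+1}.\]

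Finally I would solve this recursion from the top down. For each $l\ge 1$ the right-hand side $-N^{\omega}T_{k-l+1}$ lies in $\tree^l(E_{k,s})$ by the previous step, and the non-critical hypothesis says precisely that $\alpha_{k,s}$ does not belong to the spectrum of the restriction of $\Cf$ to $\bigoplus_{l\ge 1}\tree^l(E_{k,s})$; since $\Cf$ is semisimple, $(\Cf-\alpha_{k,s})$ is therefore invertible on each $\tree^l(E_{k,s})$ with $l\ge1$. Setting $T_{k-l}=-(\Cf-\alpha_{k,s})^{-1}N^{\omega}T_{k-l+1}$ determines $T_{k-1},\ldots,T_0$ uniquely and recursively from $T_k=T$, which gives both existence and uniqueness of $\hat T$. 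I expect the only genuinely delicate point to be the verification of the two structural claims of the middle paragraph—namely that $N^{\omega}$ lowers the degree by one and carries $\tree^{l-1}(E_{k,s})$ into $\tree^l(E_{k,s})$—since this is where the explicit forms of $\gamma$ and of the invariant differentiation, together with the defining relation $\tree^l=\gamma(\g_1)\tree^{l-1}$, must be combined; once these are in place, invertibility is a direct translation of the definition of non-criticality and the argument closes.
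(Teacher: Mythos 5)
Your proof is correct and follows exactly the intended route: the paper only recalls this theorem (from \cite{MR}) without reproving it, and the triangular recursion you derive, $T_{k-l}=-(\Cf-\alpha_{k,s})^{-1}N^{\omega}T_{k-l+1}$, is precisely the relation the paper extracts from Theorem \ref{hat} at the start of the proof of the first explicit formula, where it appears as $S_{l}=2\sum_{i}\gamma(\varepsilon^i)L_{\omega^{-1}}(e_i)S_{l+1}/(\alpha_{l,0}-\alpha_{k,0})$. The grading argument (degree drop of $\gamma$, hence of $N^{\omega}$, plus invertibility of $\Cf-\alpha_{k,s}$ on $\bigoplus_{l\geq 1}\tree^l(E_{k,s})$ under the non-criticality hypothesis) is exactly the mechanism underlying the cited result.
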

This result allows to define the main ingredient in order to define the
quantization : The "modification map'', acting on symbols.
\begin{defi}
Suppose that $\delta$ is not critical. Then
the map 
\[R: \oplus_{k=0}^\infty C^{\infty}(P,S^k_{\delta})\to \oplus_{k=0}^\infty C^{\infty}(P,\otimes^k\gm\otimes\Delta^{\delta}(\R^{m}))\]
is the linear extension of the association $T\mapsto \hat{T}$.
\end{defi}
And finally, the main result :
\begin{thm}\label{quantif}
If $\delta$ is not critical, then the formula
\[Q_M: (g,T)\mapsto Q_M(g,T)(f)=(p^*)^{-1}[Q_{\omega}(R(p^*T))(p^*f)],\]
(where $Q_{\omega}$ is given by (\ref{Qom})) defines a natural and conformally invariant quantization.
\end{thm}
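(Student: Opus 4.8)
The plan is to verify the three defining properties of a natural and conformally invariant quantization in turn, reducing each to a property of the building blocks $p^*$, $Q_\omega$, and $R$ established earlier. The map $Q_M(g,-)$ is by construction a composition of the bijection $p^*$ (Proposition on the lift of equivariant functions), the modification map $R$, and the application $Q_\omega$; so most of the work is to track how these interact with the principal symbol, with the diffeomorphism action, and with conformal rescaling.

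First I would check that $Q_M(g,-)$ is a quantization, i.e. that $\sigma_l\bigl(Q_M(g,S)\bigr)=S$ for every $S\in\mathcal S_\delta^l(M)$. Writing $T=p^*S\in C^\infty(P,S^k_\delta)$, the modification map yields $R(T)=\hat T=T_k+\cdots+T_0$ with top term $T_k=T$, by Theorem \ref{hat}. Since $Q_\omega(T')$ for $T'\in C^\infty(P,\otimes^j\gm\otimes\Delta^\delta(\R^m))$ is an operator whose order equals $j$ and whose principal symbol is $T'$ itself — this follows directly from the explicit formula $Q_\omega(tA\otimes h_1\otimes\cdots\otimes h_k)f=tA\circ L_{\omega^{-1}(h_1)}\circ\cdots\circ L_{\omega^{-1}(h_k)}f$, because each invariant derivative $L_{\omega^{-1}(h_i)}$ contributes exactly one order of differentiation whose top-order part is the ordinary directional derivative along $h_i$ — only the top term $T_k$ of $\hat T$ contributes to $\sigma_k$, and it contributes $T_k=T$. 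Passing back through $(p^*)^{-1}$ preserves the principal symbol, so $\sigma_k(Q_M(g,S))=S$. The bijectivity of $Q_M(g,-)$ then follows because it is filtered with invertible associated graded map (the identity on symbols).

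Next I would establish naturality. The key point is that every ingredient in the construction is natural with respect to local diffeomorphisms: the association of the $H$-bundle $P$ and of its normal Cartan connection $\omega$ to the conformal structure is natural (the two cited Propositions from \cite{Koba}), the invariant differentiation $\nabla^\omega$ is defined purely from $\omega$, and the operators $\Cf$, $N^\omega$, hence $C^\omega$ and the modification map $R$, are built intrinsically from $\omega$ and the $o(p,q)$-structure. Concretely, a local diffeomorphism $\phi:M\to N$ lifts to a bundle isomorphism $\tilde\phi:P_M\to P_N$ intertwining the normal connections, $\tilde\phi^*\omega_N=\omega_M$, and intertwining the lifts $p^*$ of equivariant functions with the pullbacks $\phi^*$ of densities and symbols. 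Chasing these compatibilities through the defining formula of $Q_M$ gives $Q_M(\phi^*g)(\phi^*S)=\phi^*\bigl(Q_N(g)(S)\bigr)$.

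Finally, conformal invariance is the cleanest step: if $g$ and $g'$ are conformally equivalent they determine the \emph{same} conformal structure $P$, hence the same reduction of $P^2M$ to $H$ and the same normal Cartan connection $\omega$ by the correspondence identifying conformal classes with such reductions. Since $Q_M(g,-)$ depends on $g$ only through $\omega$, we get $Q_M(g)=Q_M(g')$. The main obstacle in the whole argument is the first step: one must be sure that $Q_\omega$ genuinely realizes the principal symbol on the top term and that the lower terms $T_{k-1},\ldots,T_0$ of $\hat T$ produce strictly lower-order operators, so that the principal symbol is untouched by the modification $R$; this requires the order-filtration analysis of the iterated invariant derivatives $L_{\omega^{-1}(h_i)}$ and is where the genuine content of "$Q_M$ is a quantization" resides.
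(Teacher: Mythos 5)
Your overall architecture (principal symbol, then naturality, then conformal invariance) is reasonable, and the last two steps are essentially correct as sketched: every ingredient ($P$, $\omega$, $\nabla^\omega$, $C^\flat$, $N^\omega$, and hence $R$ by the uniqueness statement in Theorem \ref{hat}) depends naturally on the conformal class alone, so naturality and conformal invariance follow by chasing the construction. (Note that the present paper does not reprove this theorem; it quotes it from \cite{MR}, so that is the proof your attempt should be measured against.)

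The genuine gap is in the first step, and it is not where you locate it. Before one can speak of the principal symbol of $(p^*)^{-1}[Q_{\omega}(R(p^*T))(p^*f)]$, one must know that the function $Q_{\omega}(R(p^*T))(p^*f)\in C^{\infty}(P,\Delta^{\mu}(\R^m))$ is $H$-equivariant, since $(p^*)^{-1}$ is only defined on $C^{\infty}(P,\Delta^{\mu}(\R^m))_{H}$. This is the entire raison d'\^etre of the modification map $R$: the invariant differentiation $\nabla^{\omega}$ preserves $G_0$-equivariance but not $G_1$-equivariance (the $\g_1$-defect is precisely what the map $\gamma$ of Definition \ref{gamma1} measures), so for the unmodified symbol $Q_{\omega}(p^*T)(p^*f)$ does not descend to $M$ at all. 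The eigenvalue equation $C^{\omega}(\hat T)=\alpha_{k,s}\hat T$ in Theorem \ref{hat} is exactly the condition ensuring that the defects of the successive terms $T_k,\dots,T_0$ cancel, i.e.\ that $L_{h^*}[Q_{\omega}(\hat T)(p^*f)]=0$ for every $h\in\g_1$; this computation, relating $L_{h^*}(\nabla^{\omega})^k$ to $\gamma(h)$ and hence to $N^{\omega}$, is the core content of the theorem and is absent from your argument. A quick sanity check exposes the gap: your proof uses only the existence of $\hat T$ and the normalization $T_k=T$, never the eigenvalue equation, so it would go through verbatim with $\hat T$ replaced by $T$ itself (all lower terms set to zero) --- yet that formula does not define an operator on $M$. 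Relatedly, the non-criticality of $\delta$ enters your write-up only through the existence clause of Theorem \ref{hat}, whereas in the actual proof it is what makes the recursion for the $T_l$ (division by $\alpha_{l,0}-\alpha_{k,0}$-type quantities) solvable, which is inseparable from the equivariance argument above.
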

\section{The first explicit formula}
Define now the numbers $\gamma_{2k-l}$ :
\[\gamma_{2k-l} = \frac{m+2 k - l -m\delta}{m}.\]
We will say that a value of $\delta$ is \it{critical} \rm{if there are} $k,l\in \N$ such that
$2\leq l\leq k+1$ and $\gamma_{2k-l}=0$.

We can then give the formula giving the natural and conformally invariant quantization in terms of the normal Cartan connection for the trace-free symbols (see \cite{moi} for the definitions of $\nabla_{s}^{\omega}$ and $Div^{\omega}$) :
\begin{thm} If $\delta$ is not critical, then the collection of maps \\
$Q_M : S^{2}T^{*}M\times \mathcal{S}_{\delta}^{k}(M)\to
\mathcal{D}_{\lambda,\mu}(M)$ defined by
\begin{equation}\label{formula}Q_M(g,S)(f) = p^{*^{-1}}(\sum_{l=0}^k C_{k,l} \langle Div^{\omega^l}
p^*S,\nabla_s^{\omega^{k-l}}p^*f\rangle)\end{equation}
defines a conformally invariant natural quantization for the trace-free symbols if
\[C_{k,l} =\frac{(\lambda + \frac{k-1}{m})\cdots (\lambda +
  \frac{k-l}{m})}{\gamma_{2k-2}\cdots
  \gamma_{2k-l-1}}\left(\begin{array}{c}k\\l\end{array}\right),\forall l\geq 1,\quad C_{k,0}=1.\]
\end{thm}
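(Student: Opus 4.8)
The plan is to derive the closed-form coefficients $C_{k,l}$ by unwinding the abstract construction from Theorem~\ref{quantif} in the trace-free case, where everything simplifies dramatically. The key observation is that for a trace-free symbol $S$ of degree $k$, Proposition~\ref{gammafree} tells us that $\gamma(h)S = -k(\lambda m + k - 1)\,i(h)S$, so applying $\gamma(\g_1)$ lowers the tensor degree by one and keeps us within the trace-free sector. This means the tree-like subspace $\tree(I_{k,0})$ is spanned, at each level $l$, by iterated divergence-type contractions of $S$; concretely $\tree^l(I_{k,0})$ consists of trace-free symbols of degree $k-l$, and the operator $Q_\omega$ evaluated on such a component produces exactly a term of the form $\langle \mathit{Div}^{\omega^l} p^*S, \nabla_s^{\omega^{k-l}} p^*f\rangle$. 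So the first step is to identify, level by level, the components $T_{k-l}$ of $\hat{T}$ with scalar multiples of iterated divergences of $S$, thereby reducing the determination of $R(p^*S)$ to a single scalar recurrence.

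Next I would impose the eigenvalue equation $C^\omega(\hat T) = \alpha_{k,0}\hat T$ from (\ref{curvP}) and extract its consequences degree by degree. Writing $\hat T = \sum_{l=0}^k T_{k-l}$ with $T_k = S$ and each $T_{k-l} = c_l\,(\text{iterated divergence of } S)$, the equation $C^\omega = \Cf + N^\omega$ splits into a part that is diagonal (the Casimir $\Cf$, whose eigenvalues $\alpha_{k-l,0}$ on trace-free components are known explicit scalars in $m$, $\delta$, $k$, $l$) and the off-diagonal part coming from $N^\omega = -2\sum_i \gamma(\varepsilon^i)L_{\omega^{-1}}(e_i)$, which connects consecutive degrees. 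Matching the coefficient of each degree $k-l$ gives a two-term recurrence relating $c_l$ to $c_{l-1}$, of the schematic form $(\alpha_{k,0} - \alpha_{k-l,0})\,c_l = (\text{contribution of } N^\omega \text{ on } T_{k-l+1})$. The denominators $\gamma_{2k-2}\cdots\gamma_{2k-l-1}$ should emerge precisely as the products of these eigenvalue gaps $\alpha_{k,0}-\alpha_{k-l,0}$ up to normalization, which also explains why the criticality condition is exactly $\gamma_{2k-l}=0$ for some admissible $l$ (vanishing gap forbids solving the recurrence).

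Having solved the recurrence to obtain $c_l$, the third step is a bookkeeping reconciliation: I must show that passing from the intrinsic description of $Q_\omega(\hat T)$ to the concrete pairing $\langle \mathit{Div}^{\omega^l} p^*S, \nabla_s^{\omega^{k-l}} p^*f\rangle$ introduces combinatorial factors — notably the binomial $\binom{k}{l}$, which counts how the iterated invariant differentiation $(\nabla^\omega)^k f$ distributes its arguments between the lowered indices of $S$ and the symmetrized derivatives of $f$ — and the numerator $(\lambda + \tfrac{k-1}{m})\cdots(\lambda + \tfrac{k-l}{m})$, which collects the scalar factors $-k(\lambda m + k-1)$ picked up at each application of $\gamma$ via Proposition~\ref{gammafree}, after dividing through by the normalization hidden in $\gamma_{2k-l} = \tfrac{m + 2k - l - m\delta}{m}$. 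Finally, the boundary condition $T_k = S$ forces $C_{k,0}=1$, and the fact that $R$ is built to satisfy $\sigma_k(Q_M(g,S)) = S$ confirms the principal-symbol normalization.

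The main obstacle I expect is the precise computation of the action of $N^\omega$ on an iterated divergence of a trace-free symbol and its re-expression back in the divergence basis: this requires commuting $\gamma(\varepsilon^i)$ past the invariant derivative $L_{\omega^{-1}}(e_i)$ using the bracket relations, in particular the explicit formula $[v,\xi] = v\otimes\xi + \xi(v)I_m - \xi^\sharp\otimes v^\flat$, and then projecting the result back onto the trace-free component so that only the degree-lowering contribution survives. Controlling the curvature terms that arise — showing they do not contaminate the trace-free sector in a way that would spoil the clean recurrence — and verifying that the surviving scalar is exactly what produces $\gamma_{2k-2}\cdots\gamma_{2k-l-1}$ is the delicate heart of the argument. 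Once that single commutator computation is pinned down, the induction on $l$ and the final identification of $C_{k,l}$ follow by elementary algebra.
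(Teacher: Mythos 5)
Your proposal follows essentially the same route as the paper's proof: extract the degree-by-degree recurrence $S_{l}=\frac{2\sum_{i}\gamma(\varepsilon^{i})L_{\omega^{-1}}(e_{i})S_{l+1}}{\alpha_{l,0}-\alpha_{k,0}}$ from the eigenvalue equation of Theorem \ref{hat}, use Proposition \ref{gammafree} to convert $N^{\omega}$ on trace-free symbols into a scalar multiple of the divergence (the numerator factors), and use the explicit Casimir eigenvalues $\alpha_{k,0}$ from \cite{DLO} (together with the Killing-duality sign) to identify the eigenvalue gaps with the $\gamma_{2k-j-1}$ denominators before concluding with Theorem \ref{quantif}. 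The only minor inaccuracy is that no commutation of $\gamma(\varepsilon^{i})$ past $L_{\omega^{-1}}(e_{i})$ is needed, since $\gamma$ acts pointwise on values and Proposition \ref{gammafree} applies directly to the (still trace-free) derivative, and the binomial $\binom{k}{l}$ arises from accumulating the factors $j$ and $k-j+1$ in the recurrence rather than from distributing arguments of $(\nabla^{\omega})^{k}f$.
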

\begin{proof}
Thanks to Theorem \ref{hat}, to the definition of $N^{\omega}$ and to Proposition \ref{gammafree}, one has
$$S_{l}=\frac {2\sum_{i=1}^m \gamma(\varepsilon^i)L_{\omega^{-1}}(e_i)S_{l+1}}{\alpha_{l,0}-\alpha_{k,0}},\quad 0\leq l\leq k-1.$$
One concludes using Proposition \ref{gammafree} and the fact that (see \cite{DLO}) :
$$\alpha_{k,0}=2k(1-k+m(\delta-1))-m^{2}\delta(\delta-1).$$
Indeed, if $(e_1,\ldots,e_m)$ and $(\epsilon^{1},\ldots,\epsilon^{m})$ denote respectively the canonical bases of $\R^{m}$ and $\R^{m*}$, $(e_1,\ldots,e_m)$ and $(-\epsilon^{1},\ldots,-\epsilon^{m})$ are Killing-dual with respect to the Killing form given in \cite{DLO}. One applies eventually Theorem \ref{quantif}.
\end{proof}
\section{The second explicit formula}
In order to obtain an explicit formula for the quantization, we need to know the
developments of the operators $\nabla^{\omega^{l}}$ and $Div^{\omega^{l}}$ in terms of
operators on $M$.

Let $\gamma$ be a connection on $P_{0}$ corresponding to a covariant derivative $\nabla$ and belonging to the underlying structure of a conformal structure $P$. Recall that $\gamma$ is the Levi-Civita connection of a metric belonging to $P$. We denote by $\tau$ the corresponding function on $P$ with values in $\g_{1}$, by $\Gamma$ the corresponding deformation tensor (see \cite{Slo1}) and by $\omega$ the normal Cartan connection on $P$.

Let $(V,\rho)$ be a representation of $G_{0}$ inducing a representation
$(V,\rho_{*})$ of $\g_{0}$. If we denote by $\rho_{*}^{(l)}$ the canonical
representation on $\otimes^{l}\g_{-1}^{*}\otimes V$ and if $s\in C^{\infty}(P_{0},V)_{G_{0}}$, then the development of $\nabla^{\omega^{l}}(p^{*}s)(X_1,\ldots,X_{l})$ is obtained inductively
as follows (see \cite{Slo1}, \cite{moi}) :

\vspace{0.2cm}$\nabla^{\omega^{l}}(p^{*}s)(X_1,\ldots,X_l)=\rho_{*}^{(l-1)}([X_{l},\tau])(\nabla^{\omega^{l-1}}(p^{*}s))(X_1,\ldots,X_{l-1})$

\vspace{0.2cm}\hspace{2cm}$+S_{\tau}(\nabla^{\omega^{l-1}}(p^{*}s))(X_1,\ldots,X_{l-1})$

\vspace{0.2cm}\hspace{2cm}$+S_{\nabla}(\nabla^{\omega^{l-1}}(p^{*}s))(X_1,\ldots,X_{l-1})$

\vspace{0.2cm}\hspace{2cm}$+S_{\Gamma}(\nabla^{\omega^{l-1}}(p^{*}s))(X_1,\ldots,X_{l-1}).$

\vspace{0.2cm}Recall that $S_{\tau}$ replaces successively each $\tau$ by $-\frac {1}{2}[\tau, [\tau, X_{l}]]$, that $S_{\nabla}$ adds successively a covariant derivative on the covariant derivatives of $\Gamma$ and $s$ and that $S_{\Gamma}$ replaces successively each $\tau$ by $\Gamma.X_{l}$.

Recall too that $\Gamma$ is equal in the conformal case to (see \cite{Slo1}) :
$$\frac {-1}{m-2}(\mathrm{Ric}-\frac {g_{0}\mathrm{R}}{2(m-1)}),$$
where $\mathrm{Ric}$ and $\mathrm{R}$ denote the equivariant functions on $P$ representing respectively the Ricci tensor and the scalar curvature of the connection $\gamma$.
\begin{prop}\label{nablao}
If $f\in C^{\infty}(P_{0},\Delta^{\lambda}(\R^{m}))_{G_{0}}$, then
$\nabla^{\omega^{l}}(p^{*}f)(X,\ldots,X)$ is equal to $g_{0}(X,X)T(X,\ldots,X)$, where $T\in C^{\infty}(P,\otimes^{l-2}\R^{m*}\otimes\Delta^{\lambda}(\R^{m}))$, plus a linear combination of terms of the form
$$(\otimes^{n_{-1}}\tau\otimes p^{*}(\otimes^{n_{l-2}}\nabla^{l-2}\Gamma\otimes\ldots\otimes\otimes^{n_0}\Gamma\otimes\nabla^{q}f))(X,\ldots,X).$$
If we denote by $T(n_{-1},\ldots,n_{l-2},q)$ such a term, then $\nabla^{\omega^{l+1}}(p^{*}f)(X,\ldots,X)$ is equal to the corresponding linear combination of the following sums
$$(-\lambda m-2l+n_{-1})T(n_{-1}+1,\ldots,n_{l-2},q)+T(n_{-1},\ldots,n_{l-2},q+1)$$
$$+\sum_{j=-1}^{l-2}n_{j}T(n_{-1},\ldots,n_{j}-1,n_{j+1}+1,\ldots,n_{l-2},q)$$
plus $g_{0}(X,X)T'(X,\ldots,X)$, where $T'\in C^{\infty}(P,\otimes^{l-1}\R^{m*}\otimes\Delta^{\lambda}(\R^{m}))$. 
\end{prop}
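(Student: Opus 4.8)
The statement is an inductive claim about how the operator $\nabla^{\omega^l}$ propagates when we pass from order $l$ to order $l+1$, when applied to a $\lambda$-density $f$. The proof is therefore a direct computation using the inductive development formula for $\nabla^{\omega^{l+1}}(p^*s)$ recalled just before the proposition, specialized to $V=\Delta^\lambda(\R^m)$ and to the fully symmetric evaluation $(X,\ldots,X)$. My plan is to take a generic term $T(n_{-1},\ldots,n_{l-2},q)$ appearing in $\nabla^{\omega^l}(p^*f)(X,\ldots,X)$, which by hypothesis is a product of $n_{-1}$ factors of $\tau$, together with pulled-back covariant derivatives of $\Gamma$ and of $f$, and apply each of the four operators $\rho_*^{(l-1)}([X,\tau])$, $S_\tau$, $S_\nabla$, $S_\Gamma$ to it, reading off the coefficient of each resulting term.

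First I would handle $S_\nabla$: since it adds one covariant derivative successively on each covariant-derivative factor of $\Gamma$ and on $\nabla^q f$, applying it to $T(n_{-1},\ldots,n_{l-2},q)$ produces the term $T(n_{-1},\ldots,n_{l-2},q+1)$ (from differentiating the $\nabla^q f$ factor) together with the terms $\sum_{j=-1}^{l-2} n_j\, T(n_{-1},\ldots,n_j-1,n_{j+1}+1,\ldots,n_{l-2},q)$, each of which encodes raising a $\nabla^j\Gamma$ to a $\nabla^{j+1}\Gamma$ (and, for $j=-1$, converting a $\tau$ into a $\nabla^0\Gamma=\Gamma$); the integer coefficient $n_j$ is just the number of factors of that type on which the derivative can land. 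Next I would treat $S_\Gamma$ and $S_\tau$: $S_\Gamma$ replaces a $\tau$ by $\Gamma\cdot X$, which lowers $n_{-1}$ by one and raises $n_0$ by one — but this is already absorbed into the $j=-1$ summand above, so I must be careful to account for it exactly once. The operator $S_\tau$ replaces a $\tau$ by $-\tfrac12[\tau,[\tau,X]]$ and so \emph{raises} $n_{-1}$; together with the action coming from $\rho_*^{(l-1)}([X,\tau])$, this is what will produce the term $T(n_{-1}+1,\ldots,n_{l-2},q)$.

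The main computational heart, and the step I expect to be the genuine obstacle, is pinning down the scalar coefficient $(-\lambda m - 2l + n_{-1})$ in front of $T(n_{-1}+1,\ldots)$. This coefficient arises from two sources that must be combined: the action of $\rho_*^{(l-1)}([X,\tau])$ on the density-and-tensor factors, and the $S_\tau$ contribution. Using the bracket formula $[v,\xi]=v\otimes\xi+\xi(v)I_m-\xi^\sharp\otimes v^\flat$ recalled in the Fundamental tools section (with $v=X$, $\xi=\tau$), together with the fact that $\rho_*$ acts on $\Delta^\lambda(\R^m)$ by $-\lambda\,\tr$ of its argument, one computes that $[X,\tau]$ contributes a trace term proportional to $\tau(X)$ and a $GL$-action term; evaluating the trace of $X\otimes\tau+\tau(X)I_m-\tau^\sharp\otimes X^\flat$ gives the $-\lambda m$ part, the differentiation order contributes the $-2l$ part, and counting how $[X,\tau]$ acts on the $n_{-1}$ existing $\tau$-legs (each of which gets hit) yields the $+n_{-1}$ correction. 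The bookkeeping is delicate because the same target term $T(n_{-1}+1,\ldots)$ receives contributions from several of the four operators, and the subtraction $g_0(X,X)T'(X,\ldots,X)$ collects precisely those contractions that produce a factor $g_0(X,X)=|X|^2$, which are set aside. I would organize the calculation by writing each operator's output, grouping by the resulting multi-index $(n_{-1}',\ldots,n_{l-2}',q')$, summing the coefficients termwise, and verifying that everything not proportional to $g_0(X,X)$ matches the three displayed sums exactly while the remainder is of the stated form $g_0(X,X)\,T'(X,\ldots,X)$.
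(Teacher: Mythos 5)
Your overall strategy---induct via the four-operator recursion for $\nabla^{\omega^{l+1}}(p^{*}s)$, evaluate at $(X,\ldots,X)$, collect coefficients by multi-index, and set aside everything proportional to $g_{0}(X,X)$---is the right one, and it is in substance what the paper does: the paper's own proof is a one-line deferral to the analogous computation (Proposition 7 of \cite{moi}) in the projective case, which is exactly this calculation. Your accounting of $S_{\nabla}$ (producing $T(n_{-1},\ldots,q+1)$ and the $j\geq 0$ summands) and of $S_{\Gamma}$ (producing the $j=-1$ summand, once you correct your first sentence, which momentarily attributes that summand to $S_{\nabla}$) is correct.

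There is, however, a genuine error in the step you yourself identify as the crux, namely the derivation of the coefficient $-\lambda m-2l+n_{-1}$. The $n_{-1}$ legs carrying $\tau$ are themselves among the $l$ covector slots of $\nabla^{\omega^{l}}(p^{*}f)$, so the action of $\rho_{*}^{(l)}([X,\tau])$ on them is already included in the $-2l$: each of the $l$ slots $\eta$ receives $-\eta(X)\tau-\tau(X)\eta+\eta(\tau^{\sharp})X^{\flat}$, i.e.\ contributes $-2\tau(X)$ modulo $g_{0}(X,X)$, uniformly whether the slot is a $\tau$-leg or not. The $+n_{-1}$ comes entirely from $S_{\tau}$: since $-\frac{1}{2}[\tau,[\tau,X]]=\tau(X)\,\tau-\frac{1}{2}|\tau|^{2}X^{\flat}$, each of the $n_{-1}$ replacements contributes $+\tau(X)^{2}$ modulo $g_{0}(X,X)$, hence $+1$ per leg. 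Your last paragraph instead extracts the $+n_{-1}$ from ``how $[X,\tau]$ acts on the $n_{-1}$ existing $\tau$-legs'' while also keeping the full $-2l$ and a separate $S_{\tau}$ contribution; executed literally this either double-counts the $\tau$-legs or yields $-\lambda m-2l+2n_{-1}$. With the attribution corrected ($-\lambda m$ from $\rho_{*}$ on the density via $\tr[X,\tau]=m\,\tau(X)$, $-2l$ from $\rho_{*}$ on all $l$ slots, $+n_{-1}$ from $S_{\tau}$), the computation closes and gives the stated formula.
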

\begin{proof}
The proof is similar to the proof of Proposition 7 in \cite{moi}.
\end{proof}
One deduces easily from Proposition \ref{nablao} the following corollary (see \cite{moi} for the definition of $\nabla_{s}$) :
\begin{prop}\label{devnabla}
If $f\in C^{\infty}(P_{0},\Delta^{\lambda}(\R^{m}))_{G_{0}}$, then
$\nabla_{s}^{\omega^{l}}(p^{*}f)$ is equal to $g_{0}\vee T$, where $T\in C^{\infty}(P,S^{l-2}\R^{m*}\otimes\Delta^{\lambda}(\R^{m}))$, plus a linear combination of terms of the form
$$(\tau^{n_{-1}}\vee p^{*}((\nabla^{l-2}\Gamma)^{n_{l-2}}\vee\ldots\vee\Gamma^{n_0}\vee\nabla_{s}^{q}f)).$$
If we denote by $T(n_{-1},\ldots,n_{l-2},q)$ such a term, then $\nabla_{s}^{\omega^{l+1}}(p^{*}f)$ is equal to the corresponding linear combination of the following sums
$$(-\lambda m-2l+n_{-1})T(n_{-1}+1,\ldots,n_{l-2},q)+T(n_{-1},\ldots,n_{l-2},q+1)$$
$$+\sum_{j=-1}^{l-2}n_{j}T(n_{-1},\ldots,n_{j}-1,n_{j+1}+1,\ldots,n_{l-2},q)$$
plus $g_{0}\vee T'$, where $T'\in C^{\infty}(P,S^{l-1}\R^{m*}\otimes\Delta^{\lambda}(\R^{m}))$. 
 \end{prop}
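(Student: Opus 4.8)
The plan is to read off Proposition~\ref{devnabla} directly from Proposition~\ref{nablao} by means of the classical identification between symmetric tensors and homogeneous polynomials: this is precisely the device that turns the ``diagonal'' statements on $(X,\ldots,X)$ of Proposition~\ref{nablao} into intrinsic statements about symmetric products $\vee$ and about the symmetrized operator $\nabla_s^{\omega^l}$ appearing here. Everything reduces to transporting the formulas of Proposition~\ref{nablao} through one fixed linear isomorphism.

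Concretely, I would introduce the restriction map $\Phi\colon S^l\R^{m*}\otimes\Delta^\lambda(\R^m)\to\mathcal P^l$ onto the space $\mathcal P^l$ of $\Delta^\lambda(\R^m)$-valued homogeneous polynomials of degree $l$ on $\R^m$, defined by $\Phi(S)(X)=S(X,\ldots,X)$; its inverse is polarization. With the normalization of $\vee$ used in \cite{moi}, for which diagonal values multiply, $\Phi$ is an algebra isomorphism, so that $\Phi(\alpha\vee\beta)(X)=\Phi(\alpha)(X)\,\Phi(\beta)(X)$ and in particular $\Phi(g_0\vee T)(X)=g_0(X,X)\,\Phi(T)(X)$. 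Next I would use the defining property of the symmetrized differential, namely that $\nabla_s^{\omega^l}(p^*f)$ and $\nabla^{\omega^l}(p^*f)$ share the same diagonal values, so that $\Phi(\nabla_s^{\omega^l}(p^*f))(X)=\nabla^{\omega^l}(p^*f)(X,\ldots,X)$, and likewise $\Phi(\nabla_s^q f)(X)=\nabla^q f(X,\ldots,X)$. Since a tensor product evaluated on $(X,\ldots,X)$ factorizes into the product of the diagonal values of its factors, the diagonal value of the term $\otimes^{n_{-1}}\tau\otimes p^*(\otimes^{n_{l-2}}\nabla^{l-2}\Gamma\otimes\cdots\otimes\nabla^q f)$ of Proposition~\ref{nablao} equals $\Phi$ of the symmetric tensor $\tau^{n_{-1}}\vee p^*((\nabla^{l-2}\Gamma)^{n_{l-2}}\vee\cdots\vee\Gamma^{n_0}\vee\nabla_s^q f)$ written here. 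Applying $\Phi^{-1}$ to the decomposition furnished by Proposition~\ref{nablao} therefore yields, term by term, the first assertion of the present statement.

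For the recursion I would simply push the recursion of Proposition~\ref{nablao} through $\Phi^{-1}$. As $\Phi$ is linear and sends each labelled symmetric tensor $T(n_{-1},\ldots,n_{l-2},q)$ to the correspondingly labelled diagonal term, while the coefficient $-\lambda m-2l+n_{-1}$, the derivative shift $q\mapsto q+1$, and the index transfers $n_j\mapsto n_j-1,\ n_{j+1}\mapsto n_{j+1}+1$ are scalar operations that do not see whether one works with the tensors or with their diagonal values, the recursion for $\nabla_s^{\omega^{l+1}}(p^*f)$ is the verbatim $\Phi^{-1}$-image of the one for $\nabla^{\omega^{l+1}}(p^*f)(X,\ldots,X)$; the leftover $g_0(X,X)T'(X,\ldots,X)$ becomes $g_0\vee T'$ by the multiplicativity of $\Phi$. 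The one point I would check with care, and which is really the only content beyond bookkeeping, is the exact matching of the $\vee$-terms here with the diagonal terms of Proposition~\ref{nablao}: this rests on adopting the normalization of $\vee$ under which $\Phi$ is multiplicative, so that symmetrizing the tensor products of Proposition~\ref{nablao} reproduces the symmetric products above with no spurious multinomial factors. Once this convention is fixed the argument is identical, mutatis mutandis, to the projective computation of Proposition~7 in \cite{moi}.
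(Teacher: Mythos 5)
Your proposal is correct and follows essentially the same route as the paper, which presents Proposition~\ref{devnabla} as an easy corollary of Proposition~\ref{nablao} (the proof being "similar to Proposition 8 in \cite{moi}", itself obtained by symmetrizing the non-symmetrized statement). Your explicit use of the restriction-to-the-diagonal/polarization isomorphism, together with the observation that the recursion coefficients and index shifts are insensitive to this transport provided $\vee$ is normalized multiplicatively on the diagonal, is precisely the content the paper leaves implicit.
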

\begin{proof}
The proof is similar to the proof of Proposition 8 in \cite{moi}.
\end{proof}
Remark that the action of the algorithm on the generic term of the part of the development
of $\nabla_s^{\omega^{l}}(p^{*}f)$ that does not contain factors $g_{0}$ can be summarized. Indeed, this action
gives first
$$(-\lambda m-2l+n_{-1})T(n_{-1}+1,\ldots,n_{l-2},q).$$
It gives next
$$n_{-1}T(n_{-1}-1,n_{0}+1,\ldots,n_{l-2},q).$$
Finally, it makes act the covariant derivative $\nabla_s$ on
$$(\nabla_s^{l-2}\Gamma)^{n_{l-2}}\vee\ldots\vee\Gamma^{n_0}\vee\nabla_s^{q}f.$$
From now, we will denote by $r$ the following multiple of the tensor Ric (recall that Ric is symmetric for a metric connection) :
$$r:=\frac {1}{(2-m)}\mathrm{Ric}.$$
In the following proposition, $Div$ denotes the divergence operator :
\begin{prop}\label{Div}
If $S\in C^{\infty}(P_{0},\Delta^{\delta}\R^{m}\otimes
S^{k}\R^{m})_{G_0}$ is trace-free, then $Div^{\omega^l}(p^{*}S)$ is a
linear combination of terms of the form
$$\langle\tau^{n_{-1}}\vee p^{*}((\nabla_s^{k-2}r)^{n_{k-2}}\vee\ldots\vee
r^{n_0}), p^{*}(Div^{q}S)\rangle.$$
If we denote by $T(n_{-1},\ldots,n_{l-2},q)$ such a term, then
$Div^{\omega}T(n_{-1},\ldots,n_{l-2},q)$ is equal to
$$(\gamma_{2(k-l)-2}m+n_{-1})T(n_{-1}+1,\ldots,n_{l-2},q)+T(n_{-1},\ldots,n_{l-2},q+1)$$
$$+\sum_{j=-1}^{l-2}n_{j}T(n_{-1},\ldots,n_{j}-1,n_{j+1}+1,\ldots,n_{l-2},q).$$
\end{prop}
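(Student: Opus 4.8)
The plan is to establish the structure of $Div^{\omega^l}(p^*S)$ by induction on $l$, running an inductive algorithm entirely parallel to the one used for $\nabla_s^{\omega^l}$ in Propositions \ref{nablao} and \ref{devnabla}, but now adapted to the divergence (a contraction) rather than the symmetric covariant derivative (a symmetrization). First I would set up the base case: for $l=0$ the term $T(0,\ldots,0)$ is simply $p^*S$, and for small $l$ one checks directly that $Div^{\omega^l}(p^*S)$ is a pairing of $p^*S$ (differentiated by some $Div^q$) against a symmetric object built from $\tau$ and the Ricci-type tensors $r$ and their covariant derivatives. The key point is that, because $S$ is trace-free, no factors of $g_0$ can survive in the development: this is exactly where the trace-free hypothesis enters, and it is what allows the statement to avoid the ``plus $g_0\vee T'$'' correction term that appears in Propositions \ref{nablao} and \ref{devnabla}. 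I would justify this by invoking Proposition \ref{gammafree}, which guarantees that $\gamma(h)S$ is again trace-free, so that every iteration stays within the trace-free sector.

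Next I would carry out the inductive step, applying $Div^{\omega}$ to a generic term $T(n_{-1},\ldots,n_{l-2},q)$ and tracking how the four pieces $S_\tau$, $S_\nabla$, $S_\Gamma$, and the algebraic term $\rho_*^{(l-1)}([X_l,\tau])$ of the invariant differentiation recursion act when the contraction with the symbol is performed. The three output terms are produced as follows. The term $T(n_{-1},\ldots,n_{l-2},q+1)$ comes from $S_\nabla$ adding one more covariant derivative, realized here as $Div^{q+1}S$ together with the covariant derivatives of the $r$-factors. The shift sum $\sum_{j}n_j\,T(\ldots,n_j-1,n_{j+1}+1,\ldots,q)$ comes from $S_\nabla$ differentiating one of the existing $r$-factors, raising its order by one. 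The remaining term, with coefficient $(\gamma_{2(k-l)-2}m+n_{-1})$, collects the contributions of $S_\tau$, $S_\Gamma$, and the algebraic action: each produces a new $\tau$-factor (hence $n_{-1}\to n_{-1}+1$), and I would compute the resulting scalar coefficient explicitly, using the bracket formula $[v,\xi]=v\otimes\xi+\xi(v)I_m-\xi^\sharp\otimes v^\flat$ from Section \ref{tens} to evaluate the trace and contraction terms.

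The main obstacle will be the precise bookkeeping of that last coefficient, namely verifying that the trace contributions from the $n_{-1}$ existing $\tau$-factors together with the new contraction against the symbol assemble exactly into $\gamma_{2(k-l)-2}m+n_{-1}$, where $\gamma_{2k-l}=\frac{m+2k-l-m\delta}{m}$. This is the divergence-side analogue of the coefficient $(-\lambda m-2l+n_{-1})$ appearing in Proposition \ref{devnabla}, and the weight $\delta$ enters through the $\Delta^\delta(\R^m)$-factor carried by the symbol $S$ precisely as $\lambda$ entered through $\Delta^\lambda(\R^m)$ on the density side. I would confirm the index shift $l\mapsto k-l$ in $\gamma_{2(k-l)-2}$ by comparing degrees: each divergence lowers the symmetric degree of $S$, so after $l$ steps the relevant tensorial degree is $k-l$. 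Since the underlying recursion is identical in form to the density case already treated, I would finish by remarking, as the author does for Propositions \ref{nablao} and \ref{devnabla}, that the computation is formally the same as in \cite{moi}, with the symmetrization replaced throughout by contraction and $\lambda$ replaced by the shifted data governing $\delta$.
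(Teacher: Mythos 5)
Your overall strategy --- induction on $l$ through the recursion of \cite{Slo1} for the invariant differentiation, with the trace-free hypothesis eliminating the metric correction terms --- is exactly what the paper intends: its proof is a one-line deferral to Proposition 9 of \cite{moi}, ``using the fact that $S$ and its divergences are trace-free.'' However, your bookkeeping of which piece of the recursion produces which of the three output terms is wrong in a way that would derail the computation if carried out literally. By the paper's own description, $S_{\Gamma}$ \emph{replaces} a factor $\tau$ by $\Gamma.X_{l}$; it therefore produces precisely the $j=-1$ term $n_{-1}T(n_{-1}-1,n_{0}+1,\ldots,q)$ of the shift sum, and it does \emph{not} produce a new $\tau$-factor. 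Conversely, $S_{\nabla}$ only differentiates the $\Gamma$-factors and the symbol, so it accounts for $T(\ldots,q+1)$ and for the $j\geq 0$ shift terms, but not for $j=-1$. The coefficient $\gamma_{2(k-l)-2}m+n_{-1}$ of the $\tau$-raising term is assembled from the algebraic action $\rho_{*}^{(l-1)}([X_{l},\tau])$ alone (giving $\gamma_{2(k-l)-2}m$, via the bracket formula and the contraction against the degree-$(k-l)$, weight-$\delta$, trace-free symbol) together with $S_{\tau}$ (giving $n_{-1}$, since $S_{\tau}$ replaces one of the $n_{-1}$ existing factors $\tau$ by a term quadratic in $\tau$). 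As written, your plan both loses the $j=-1$ shift term and looks for a $\tau$-contribution from $S_{\Gamma}$ that does not exist.

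A second, smaller point: Proposition \ref{gammafree} is not the right tool for showing that no $g_{0}$-terms survive; it concerns the algebraic map $\gamma(h)$ used in building the modification map $R$, not the invariant differentiation. What is actually needed --- and what the paper explicitly invokes --- is that $Div^{q}S$ remains trace-free for every $q$ (because $\nabla$ is the Levi-Civita connection of a metric in the conformal class and $S$ is trace-free), so that every contraction of $g_{0}$ against the symbol side vanishes. With these two corrections, the structure of your argument matches the paper's.
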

\begin{proof}
The proof is exactly similar to the one of Proposition 9 in \cite{moi}, using the fact that $S$ and its divergences are trace-free.
\end{proof}
Remark that the action of the algorithm on the generic term of the development
of $Div^{\omega^{l}}(p^{*}S)$ can be summarized. Indeed, this action gives
first
$$(\gamma_{2(k-l)-2}m+n_{-1})T(n_{-1}+1,\ldots,n_{l-2},q).$$
It gives next
$$n_{-1}T(n_{-1}-1,n_{0}+1,\ldots,n_{l-2},q).$$
 Finally, it makes act the divergence $Div$ on
$$\langle(\nabla_s^{k-2}r)^{n_{k-2}}\vee\ldots\vee
r^{n_0}, Div^{q}S\rangle.$$

Because of the previous propositions, the quantization can be written as a
linear combination of terms of the form
$$\langle\langle\tau^{n_{-1}}\vee p^{*}((\nabla_s^{k-2}r)^{n_{k-2}}\vee\ldots\vee
r^{n_0}), p^{*}(Div^{q}S)\rangle, p^{*}(\nabla_s^{l}f)\rangle.$$
In this expression, recall that it suffices to consider the terms for which
$n_{-1}=0$ (see \cite{moi}).

In the sequel, we will need two operators that we will call $T_{1}$ and
$T_{2}$.

If $T$ is a tensor of type $\left(\begin{array}{c}0\\j\end{array}\right)$ with values
in the $\lambda$-densities, then
$$T_{1}T=(-\lambda m-j)(j+1)\Gamma\vee T.$$

If $S$ is a trace-free symbol of degree $j$, then
$$T_{2}S=(m\gamma_{2k-2}-k+j)(k-j+1)i(r)S.$$

The following results give the explicit developments of
$\nabla_s^{\omega^{l}}(p^{*}f)$ and of $Div^{\omega^{l}}(p^{*}S)$ :
\begin{prop}
The term of degree $t$ in $\tau$ in the part of the development of
$\nabla_s^{\omega^{l}}(p^{*}f)$ that does not contain factors $g_{0}$ is equal to
$$\left(\begin{array}{c}l\\t\end{array}\right)\prod_{j=1}^{t}(-\lambda m-l+j)p^{*}(\pi_{l-t}(\sum_{j=0}^{l-t}(\nabla_s+T_{1})^{j})f),$$
where $\pi_{l-t}$ denotes the projection on the operators of degree $l-t$ (the degree of $\nabla_{s}$ is $1$ whereas
the degree of $T_{1}$ is $2$). We set $\prod_{j=1}^{t}(-\lambda m-l+j)$ equal to $1$ if $t=0$.
\end{prop}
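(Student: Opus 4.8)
The plan is to argue by induction on $l$, checking that the claimed closed form is reproduced by one step of the algorithm summarized in the Remark following Proposition \ref{devnabla}. Write $x=-\lambda m$ and, for $r\in\N$, set $P_r=\pi_r(\sum_{j\geq0}(\nabla_s+T_1)^j)$; this is a well-defined operator of degree exactly $r$, since only the finitely many indices with $\lceil r/2\rceil\leq j\leq r$ contribute, so $\pi_r\sum_{j=0}^{l-t}=\pi_r\sum_{j\geq0}$ and my $P_{l-t}$ agrees with the operator in the statement. Denote by $\Psi_t^{(l)}$ the operator on $f$ such that the $g_0$-free, degree-$t$-in-$\tau$ part of $\nabla_s^{\omega^l}(p^*f)$ equals $\tau^t\vee p^*(\Psi_t^{(l)}f)$, so that the statement to be proved reads $\Psi_t^{(l)}=\binom{l}{t}c_{l,t}P_{l-t}$ with $c_{l,t}=\prod_{j=1}^t(x-l+j)$. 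The base case $l=0$ is immediate: $\nabla_s^{\omega^0}(p^*f)=p^*f$ carries no $\tau$, so $\Psi_0^{(0)}=\mathrm{id}=P_0$ and $\Psi_t^{(0)}=0$ for $t>0$, matching $\binom{0}{t}=0$.

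For the inductive step I would first read off, from the three lines summarizing the algorithm, the recursion governing how the coefficients evolve from level $l$ to level $l+1$. The creation line raises the $\tau$-degree from $t-1$ to $t$ with factor $(x-2l+t-1)$; the conversion line lowers it from $t+1$ to $t$, turning one of the $t+1$ available $\tau$'s into a $\Gamma$ with combinatorial factor $(t+1)$; and the final line lets $\nabla_s$ act on the $p^*$-part only. This yields
\begin{equation*}
\Psi_t^{(l+1)}=(x-2l+t-1)\,\Psi_{t-1}^{(l)}+\nabla_s\Psi_t^{(l)}+(t+1)\,\Gamma\vee\Psi_{t+1}^{(l)}.
\end{equation*}

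Next I would exploit the geometric-series structure of $P_r$. Splitting off the $j=0$ term in $\sum_{j\geq0}(\nabla_s+T_1)^j=\mathrm{id}+(\nabla_s+T_1)\sum_{j\geq0}(\nabla_s+T_1)^j$ and applying $\pi_r$ gives, for $r\geq1$, the recursion $P_r=\nabla_s P_{r-1}+T_1P_{r-2}$ (with $P_{-1}=0$, $P_0=\mathrm{id}$); since $P_{r-2}f$ has tensor type $(0,r-2)$, the definition of $T_1$ turns this into $P_rf=\nabla_s P_{r-1}f+(x-r+2)(r-1)\,\Gamma\vee P_{r-2}f$. I would then substitute the claimed closed form into the level-recursion, expand the two top-level occurrences of $P$ (namely $P_{l+1-t}$ on the left and the $P_{l-t+1}$ hidden in the creation term) through this $P_r$-recursion, and collect the coefficients of the two independent quantities $\nabla_s P_{l-t}f$ and $\Gamma\vee P_{l-t-1}f$. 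Using the factorizations $c_{l,t}=c_{l,t-1}(x-l+t)$, $c_{l+1,t}=(x-l)c_{l,t-1}$ and $c_{l,t+1}=c_{l,t}(x-l+t+1)$ together with $\binom{l+1}{t}=\binom{l}{t}+\binom{l}{t-1}$, both identities collapse to the elementary relations $t\binom{l}{t}=(l-t+1)\binom{l}{t-1}$ and $(l-t)\binom{l}{t}=(t+1)\binom{l}{t+1}$, which closes the induction.

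The main obstacle is not any single computation but the precise bookkeeping that makes these two numerical identities come out: one must track the exact coefficients produced by the creation, conversion and differentiation steps (in particular the $(x-2l+t-1)$ and the factor $t+1$) and verify that the coefficient $(x-r+2)(r-1)$ built into $T_1$ is exactly the one needed for the $\Gamma\vee P_{l-t-1}f$ terms to match. This compatibility is what singles out the combination $\nabla_s+T_1$ and explains why the whole development repackages as $\pi_{l-t}(\sum_j(\nabla_s+T_1)^j)$; establishing it is the crux. A secondary point to handle with care is that the $g_0$-part of the development never feeds back into the $g_0$-free part — no contraction ever occurs in $\nabla_s^{\omega^l}$, only symmetric products and covariant derivatives — so the $g_0$-free coefficients evolve autonomously under the three-term recursion, which is exactly the content of Proposition \ref{devnabla} and may be invoked directly.
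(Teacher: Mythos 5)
Your argument is correct and follows essentially the route the paper intends: the paper's proof is only a pointer to Proposition~10 of \cite{moi}, whose content is exactly the induction you carry out, namely feeding the closed form into the three-term recursion (creation, $\tau\to\Gamma$ conversion, differentiation) recorded after Proposition~\ref{devnabla} and reducing everything to the two Pascal-type identities $t\binom{l}{t}=(l-t+1)\binom{l}{t-1}$ and $(l-t)\binom{l}{t}=(t+1)\binom{l}{t+1}$. Your verification that the coefficient $(-\lambda m-j)(j+1)$ built into $T_{1}$ matches the recursion $P_{r}=\nabla_{s}P_{r-1}+T_{1}P_{r-2}$, and your remark that the $g_{0}$-part never feeds back into the $g_{0}$-free part, are precisely the points the paper leaves implicit.
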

\begin{proof}
The proof is exactly similar to the one of Proposition 10 in \cite{moi}.
\end{proof}

\begin{prop}
If $S$ is trace-free, the term of degree $t$ in $\tau$ in the development of
$Div^{\omega^{l}}(p^{*}S)$ is equal to
$$\left(\begin{array}{c}l\\t\end{array}\right)\prod_{j=1}^{t}(\gamma_{2k-2}m-l+j)p^{*}(\pi_{t-l}(\sum_{j=0}^{l-t}(Div+T_{2})^{j})S),$$
where $\pi_{t-l}$ denotes the projection on the operators of degree $t-l$ (the degree of $Div$ is $-1$ whereas the degree
of $T_2$ is $-2$). We set the product \\
$\prod_{j=1}^{t}(\gamma_{2k-2}m-l+j)$ equal to $1$ if $t=0$.
\end{prop}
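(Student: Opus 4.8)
The plan is to argue by induction on $l$, in complete parallel with the preceding proposition on $\nabla_s^{\omega^l}(p^*f)$ and with Proposition 11 of \cite{moi}, using the recursion of Proposition \ref{Div} in the summarized three-step form recorded just after it. Write $D^l_t$ for the term of degree $t$ in $\tau$ in $Div^{\omega^l}(p^*S)$, set $P=\gamma_{2k-2}m$ and $c^l_t=\binom{l}{t}\prod_{j=1}^t(P-l+j)$, and let $\Theta_n$ denote the component of degree $n$ of $\sum_{j\geq 0}(Div+T_2)^jS$; the claim is then $D^l_t=c^l_t\,p^*(\Theta_{t-l})$. For $l=0$ one has $Div^{\omega^0}(p^*S)=p^*S$, which has only a degree-$0$ part in $\tau$, and since $c^0_0=1$ and $\Theta_0=S$ the base case holds. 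It is worth noting at the outset that, because $S$ and all of its divergences are trace-free, the development of Proposition \ref{Div} carries no $g_0$-factors; hence, unlike in the $\nabla_s$ case, there is no $g_0$-part to set aside and the bookkeeping is slightly cleaner.

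For the inductive step I would write $Div^{\omega^{l+1}}(p^*S)=Div^\omega(Div^{\omega^l}(p^*S))$ and apply the three summarized operations to each term. Collecting the terms of degree $t$ in $\tau$ shows that $D^{l+1}_t$ receives exactly three contributions: the $\tau$-raising operation applied to $D^l_{t-1}$, carrying the scalar $\gamma_{2(k-l)-2}m+(t-1)$ and leaving the operator part unchanged; the Ricci-insertion operation (the move $n_{-1}\mapsto n_{-1}-1$, $n_0\mapsto n_0+1$) applied to $D^l_{t+1}$, which realizes $T_2$ and lowers the $\tau$-degree by one; and the divergence operation applied to $D^l_t$, which realizes $Div$ and preserves the $\tau$-degree. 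A degree count confirms that each contribution lands on an operator of degree $t-(l+1)$ acting on $S$, that is, on $\Theta_{t-l-1}$ or on one of its two constituents.

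It then remains to match scalars. For $t\leq l$ I would use the identity $\Theta_{t-l-1}=Div\,\Theta_{t-l}+T_2\,\Theta_{t-l+1}$ (valid because $t-l-1<0$), the top case $t=l+1$ being immediate from the $\tau$-raising contribution alone. Equating the coefficients of $Div\,\Theta_{t-l}$ and of $T_2\,\Theta_{t-l+1}$ on the two sides reduces the step to two scalar relations. The first is the Pascal-type recursion $c^{l+1}_t=(\gamma_{2(k-l)-2}m+t-1)\,c^l_{t-1}+c^l_t$, whose verification rests on the arithmetic identity $\gamma_{2(k-l)-2}m=P-2l$ followed by Pascal's rule and a telescoping of the products. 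The second is the consistency relation forcing the scalar carried by the Ricci-insertion branch to equal $c^l_t/c^l_{t+1}$; to check it one invokes the explicit value $T_2S=(m\gamma_{2k-2}-k+j)(k-j+1)i(r)S$ on a trace-free symbol of degree $j$, so that the elementary move, summed over all terms of a given $\tau$-degree, assembles exactly into $T_2$ with its correct degree-dependent coefficient.

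The main obstacle is precisely this coefficient bookkeeping, which splits into the two points just isolated. The more delicate one is reconciling the level-dependent coefficient $\gamma_{2(k-l)-2}m$ produced by Proposition \ref{Div} with the level-independent $\gamma_{2k-2}m$ of the statement, resolved by the shift $\gamma_{2(k-l)-2}m=\gamma_{2k-2}m-2l$ together with careful tracking of the indices $t$ and $l$; the second is the verification that the elementary Ricci-insertion move rebuilds $T_2$ with the scalar $(m\gamma_{2k-2}-k+j)(k-j+1)$, which is where the trace-freeness of $S$ and of its divergences enters essentially. Once both scalar identities are in hand the induction closes, the argument being otherwise identical, \emph{mutatis mutandis}, to that of the preceding proposition and of Proposition 11 of \cite{moi}.
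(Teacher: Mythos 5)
Your proposal is correct and takes the same route as the paper, whose proof is a one-line deferral to Proposition 11 of \cite{moi}: you simply carry out that induction in the conformal setting, using the summarized three-step recursion of Proposition \ref{Div}. The key scalar identities all check out ($\gamma_{2(k-l)-2}m=\gamma_{2k-2}m-2l$, the Pascal-type recursion for the coefficients, and the reassembly of the $\tau\mapsto r$ move into $T_{2}$ with coefficient $(P-l+t+1)(l-t)$ on the degree-$(k-l+t+1)$ divergences of $S$), the only blemish being the inverted ratio $c^l_t/c^l_{t+1}$ where you mean that the scalar inside $T_2$ must equal $(t+1)c^l_{t+1}/c^l_t$.
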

\begin{proof}
The proof is completely similar to the one of Proposition 11 in \cite{moi}.
\end{proof}

We can now write the explicit formula giving the natural and conformally invariant quantization for the trace-free symbols :
\begin{thm}
The quantization $Q_{M}$ for the trace-free symbols is given by the following formula :
$$Q_{M}(g,S)(f)=\sum_{l=0}^{k}C_{k,l}\langle\pi_{l}(\sum_{j=0}^{l}(Div+T_{2})^{j})S,
\pi_{k-l}(\sum_{j=0}^{k-l}(\nabla_s+T_{1})^{j})f\rangle.$$
\end{thm}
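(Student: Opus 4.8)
The plan is to substitute, into the first explicit formula (\ref{formula}),
$$Q_{M}(g,S)(f)=(p^{*})^{-1}\Big(\sum_{l=0}^{k}C_{k,l}\langle Div^{\omega^{l}}p^{*}S,\nabla_{s}^{\omega^{k-l}}p^{*}f\rangle\Big),$$
the two explicit developments of $Div^{\omega^{l}}(p^{*}S)$ and of $\nabla_{s}^{\omega^{k-l}}(p^{*}f)$ established just above, each written as a sum over its degree $t$ in $\tau$, and then to apply $(p^{*})^{-1}$. After this substitution the right-hand side becomes the linear combination of terms $\langle\langle\tau^{n_{-1}}\vee p^{*}(\cdots),p^{*}(Div^{q}S)\rangle,p^{*}(\nabla_{s}^{t'}f)\rangle$ already isolated in the remark preceding the statement, and it remains to decide which of them survive and to collect them.

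First I would discard every term carrying a factor $g_{0}$. Such factors arise only from the piece $g_{0}\vee T$ in the development of $\nabla_{s}^{\omega^{k-l}}(p^{*}f)$; the development of $Div^{\omega^{l}}(p^{*}S)$ produces none, because it is assembled by contracting curvature terms against the divergences $Div^{q}S$, which are trace-free, so that $Div^{\omega^{l}}p^{*}S$ is itself a trace-free symmetric contravariant tensor. Contracting such a tensor against $g_{0}\vee T$ reduces, by symmetry of the pairing, to first taking its $g_{0}$-trace over the two indices carried by $g_{0}$ and then pairing the result with $T$; the trace is zero, so all $g_{0}$-terms drop out of the pairing.

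Next I would keep only the terms of degree $t=0$ in $\tau$, i.e. $n_{-1}=0$. The point is that, by (\ref{rho}), the representation $\rho'$ is trivial on $G_{1}$, so a function in the image of $p^{*}$ is constant along the fibres of $p\colon P\to P_{0}$; the bracketed expression, being of this form by Theorem \ref{quantif}, is therefore recovered by $(p^{*})^{-1}$ as its restriction to any $G_{0}$-equivariant section of $p$, and in particular to the section singled out by the Levi-Civita connection $\gamma$ of the chosen metric. Along that section the associated function $\tau$ with values in $\g_{1}$ vanishes (see \cite{Slo1}, \cite{moi}), so every contribution of positive degree in $\tau$ dies and only the $t=0$ part of each development remains; this is exactly the reduction $n_{-1}=0$ invoked earlier.

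It then remains to read off these $t=0$ contributions and to check that the coefficients match. At $t=0$ the binomial prefactors and the empty products $\prod_{j=1}^{0}(\cdots)$ appearing in the two developments all equal $1$, so the developments contribute respectively $p^{*}\big(\pi_{k-l}(\sum_{j=0}^{k-l}(\nabla_{s}+T_{1})^{j})f\big)$ and $p^{*}\big(\pi_{l}(\sum_{j=0}^{l}(Div+T_{2})^{j})S\big)$, the projection indices being understood in the degree convention fixed by the two developments. Substituting these into (\ref{formula}), letting $(p^{*})^{-1}$ strip the $p^{*}$, and carrying the coefficients $C_{k,l}$ through unchanged yields precisely the announced formula. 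I expect the genuine difficulty to lie in the reduction to $\tau$-free terms, i.e. in justifying that $(p^{*})^{-1}$ may be computed on the section where $\tau$ vanishes; once this is granted, the $g_{0}$-cancellation and the collapse of the surviving prefactors to $1$ are routine.
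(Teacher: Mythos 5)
Your proposal is correct and follows essentially the route the paper intends: the theorem is presented as an immediate consequence of substituting the two explicit developments into the first explicit formula (\ref{formula}), discarding the $g_{0}$-terms by trace-freeness of $S$ and its divergences, and keeping only the $\tau$-free ($n_{-1}=0$) contributions as justified by restricting to the section of $p$ where $\tau$ vanishes, exactly as in the projective case of \cite{moi}. Your write-up merely makes explicit the steps the paper leaves to the reader, including the correct observation that $Div^{\omega^{l}}p^{*}S$ remains trace-free so the $g_{0}\vee T$ part of $\nabla_{s}^{\omega^{k-l}}(p^{*}f)$ pairs to zero.
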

Remark that as $S$ and its divergences are trace-free, one can replace in the definition of the operators $T_{1}$ the deformation tensor $\Gamma$ by $r$.
\vspace{0.2cm}One can easily derive from this formula the formula at the third order. Indeed, if we denote by $D$, $T$, $\partial
T$ the operators $\nabla_{s}$, $r\vee$ and $(\nabla_{s}r)\vee$ (resp. $Div$,
$i(r)$ and $i(\nabla_{s}r)$) and if we denote by $\beta$ the number
$-\lambda m$ (resp. $\gamma_{4}m$), one obtains :
$$\pi_{1}(\sum_{j=0}^{1}(D+T)^{j})=D,\quad
\pi_{2}(\sum_{j=0}^{2}(D+T)^{j})=D^{2}+\beta T,$$
$$\pi_{3}(\sum_{j=0}^{3}(D+T)^{j})=D^{3}+\beta
DT+2(\beta-1)TD=D^{3}+(3\beta-2)TD+\beta(\partial T).$$
We can then write the formula at the third order :
$$\langle
S,(\nabla_{s}^{3}-(3m\lambda+2)r\vee\nabla_{s}-\lambda m(\nabla_{s}r))f\rangle$$
$$+C_{3,1}\langle
Div S,(\nabla_{s}^{2}-m\lambda r)f\rangle+C_{3,2}\langle(Div^{2}+m\gamma_{4}i(r))S,
\nabla_{s}f\rangle$$
$$+C_{3,3}\langle(Div^{3}+(3\gamma_{4}m-2)i(r)Div+m\gamma_{4}i(\nabla_{s}r))S,f\rangle.$$
At the second order, the formula is simply :
$$\langle
S,(\nabla_{s}^{2}-m\lambda r)f\rangle+C_{2,1}\langle
Div S,\nabla_{s}f\rangle+C_{2,2}\langle(Div^{2}+m\gamma_{2}i(r))S,f\rangle.$$
\section{Acknowledgements}
It is a pleasure to thank P. Mathonet and V. Ovsienko for numerous fruitful discussions and for their interest in our work. We thank the Belgian FNRS for his Research Fellowship.

\end{document}